\newtheorem{theorem}{Theorem}[section]
\newtheorem{cor}[theorem]{Corollary}
\newtheorem{pbm}[theorem]{Question}
\theoremstyle{definition}
\newtheorem{definition}[theorem]{Definition}
\newtheorem{example}[theorem]{Example}
\newtheorem{rem}[theorem]{Remark}
\newcommand{\R}{\mathbb R}
\newcommand{\dif}{\mathrm{d}}
\newcommand{\Z}{\mathbb{Z}}
\newcommand{\N}{\mathbb{N}}
\newcommand{\C}{\mathbb{C}}
\newcommand{\Q}{\mathbb{Q}}
\newcommand{\weightsum}[3]{ L_{#1}(#2,#3)}
\begin{document}

\title{Sums of Weighted Lattice Points of Polytopes}

\author[De Loera]{Jes\'us A.\ De Loera}
\address{Department of Mathematics, University of California, Davis}
\email{deloera@math.ucdavis.edu}
\author[Escobar]{Laura Escobar}
\address{Department of Mathematics, Washington University in St. Louis}
\email{laurae@wustl.edu}
\author[Kaplan]{Nathan Kaplan}
\address{Department of Mathematics, University of California, Irvine}
\email{nckaplan@math.uci.edu}
\author[Wang]{Chengyang Wang}
\address{Department of Mathematics, University of California, Davis}
\email{cyywang@ucdavis.edu}

\maketitle

\centerline{\em Dedicated to Mich\`ele Vergne in celebration of her 80th birthday and} 
\centerline{\em with admiration of her broad and powerful contributions to Mathematics.}

\begin{abstract} We study the problem of counting lattice points of a polytope that are weighted by an Ehrhart quasi-polynomial of a family of parametric polytopes. As applications one can compute integrals and maximum values of such quasi-polynomials, as well as obtain new identities in representation theory. These topics have been of great interest to Mich\`ele Vergne since the late 1980's. Our new contribution is a result that transforms weighted sums into unweighted sums, even when the weights are very general quasipolynomials.  In some cases it leads to faster integration over a polytope.  We can create new algebraic identities and conjectures in algebraic combinatorics and number theory.
\end{abstract}

\section{Introduction}

We are given a rational convex polytope $P$ in $\R^n$ and $w(x)$ a quasi-polynomial function in $n$ variables. We call $w$ a \emph{weight function} (the precise definition of the quasi-polynomials we use here is below, but for now you can think of $w$ as a polynomial). A computational problem arising throughout the mathematical sciences, and of great interest to Mich\`ele Vergne, is to compute, or at least estimate,  

\begin{equation} \label{thesummation}
\weightsum{P}{w}{t}=\sum_{x\in tP \cap \Z^n}w(x).
\end{equation}

Note that, for a fixed $w$ as the polytope $P$ 
is dilated by an integer factor $t\in \N$  we obtain a function of $t$, which we call the \emph{weighted Ehrhart quasi-polynomial} for the pair $(P,w)$. The name is natural as when $w(x)=1$ then $\weightsum{P}{1}{t}$ yields the classical \emph{Ehrhart quasi-polynomial}.
We recommend \cite[Chapter 4]{Stanley1997Enumerative-Com} or \cite{beckrobins} and the references there for excellent introductions to Ehrhart functions and Ehrhart quasi-polynomials.

One can prove $\weightsum{P}{w}{t}$ is a quasi-polynomial in the sense that 
it is a function in the variable $t$ which is a sum of monomials up to degree $d+M$, where $M=\deg w$, but  whose coefficients $E_m $ are periodic functions of $t\in \N$: 
$$
\weightsum{P}{w}{t}= \sum_{m=0}^{d+M} E_m t^m.
$$
The leading coefficient of $\weightsum{P}{w}{t}$ is given by the integral of $w$ over the polytope $P$. These integrals were studied in \cite{Barvinok-1991}, \cite{Barvinok-1992} and more recently in \cite{BBDKV08}.

Sums of weighted lattice points are rather important as shown by their presence in so many areas of mathematics like enumerative combinatorics \cite{andrewspaulereise2001}, algebraic combinatorics \cite{FedericoErwan, chapotonqanalogues}, algebraic geometry \cite{Dickenstein+Vergneetal2012,torichochster} 
representation theory \cite{knutsontao1999, BaldoniVergneKronecker2018}, statistics \cite{DG, Chenetalstats},  and in symbolic integration and optimization \cite{BBDKV08,deloera-hemmecke-koeppe-weismantel:intpoly-fixeddim}, among many other fields. 

This paper has two goals.
First, we expand on a methodology for transferring weighted sums of lattice points inside a polytope to an unweighted sum of the lattice points (point enumeration) inside a new higher dimensional polytope.
Second, in our setting weights are very general (they can be quasi-polynomials) and thus our formulas find applications in the computation of integrals as well as in the generation of identities in number theory and representation theory. 


\subsection*{Our New Contributions:} 
We now outline the main contributions of this article. The main theorem is a surprisingly simple way to evaluate the function $\weightsum{P}{w}{t}$ where $P$ is a rational polytope and $w(x)$ is a very general weight function. The key idea is that we build a new polytope, the \emph{weight lifting polytope} $P^*$, for which these functions become simply $\weightsum{P^*}{1}{t}$, in other words, just a ``standard'' lattice point counting function. This way (often) the weighted Ehrhart polynomial $P$ is equivalent to the (usual) Ehrhart polynomial of $P^*$. Clearly, $P^*$ will depend on both $P$ and $w$:

\begin{theorem}[The existence of weight lifting polytopes] \label{polytopelifting} \quad
Let $P$ be a rational convex polytope in the form $\{\mathbf{x}\mid \mathbf{Ax} = \mathbf{b}, \mathbf{x}\geq 0\}$, where $\mathbf{A} \in \Z^{s \times n}, \mathbf{b} \in \Z^{s}$.
Let $Q(x_{1},\ldots,x_{n})$ be the parametric family of rational convex polytopes parameterized by $x_{1},\ldots,x_{n}$ as follows,
$$Q(x_{1},\ldots,x_{n}) = \left\{ \mathbf{y} \mid \mathbf{Cy} = \sum_{i=1}^{n}x_{i}\mathbf{d_{i}} + \mathbf{e}, \mathbf{y} \geq \mathbf{0} \right\},$$
where $\mathbf{C} \in \Z^{r \times m}, \mathbf{d_{i}},\mathbf{e} \in \Z^{r}$.  
Define $w(\mathbf{x})$ to be the multivariate Ehrhart quasi-polynomial function in $n$ variables that counts the number of lattice points in the parametric polytope $Q(x_{1},\ldots,x_{n})$ when $x_{i}$ are chosen integers, i.e., 
$$w(x_{1},\ldots,x_{n}) = |Q(x_{1},\ldots,x_{n}) \cap \mathbb{Z}^{m}|.$$
\begin{enumerate}
\item There is a \emph{weight lifting polytope} $P^*\subset \mathbb{R}^{n+m}$ defined by
$$P^{*} = \left\{ \left. \begin{pmatrix} \mathbf{x} \\ \mathbf{y} \end{pmatrix} \right|   \mathbf{A}^{*}\begin{pmatrix} \mathbf{x} \\ \mathbf{y} \end{pmatrix} = \begin{pmatrix} \mathbf{b} \\ -\mathbf{e} \end{pmatrix} ,\mathbf{x} \geq \mathbf{0}, \mathbf{y} \geq \mathbf{0} \right\}$$
where
$$\mathbf{A}^{*} = \begin{bmatrix} \mathbf{A} & \mathbf{0}  \\ \begin{matrix} \mathbf{d_{1}} & \mathbf{d_{2}} & \cdots & \mathbf{d_{n}} \end{matrix} & -\mathbf{C}  \end{bmatrix},
$$
for which the summation of the lattice points of $P$ weighted by $w$ equals the number of lattice points of $P^*$.

\item Moreover, when $\mathbf{e} = 0$, the construction is parametric in the sense that  the weight $w$ is a homogeneous function, $(tP)^{*} = t(P^{*})$, and 
$$\weightsum{P}{w}{t} = |(tP)^{*}\cap\mathbb{Z}^{n+m}| = |t(P^{*})\cap\mathbb{Z}^{n+m}| = \weightsum{P^*}{1}{t}.$$
\end{enumerate}
\end{theorem}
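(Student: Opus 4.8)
The plan is to prove part (1) by a straightforward Fubini-type double count of the lattice points of $P^*$, slicing them according to their first $n$ coordinates, and then to deduce part (2) by applying part (1) to the dilate $tP$ and checking that the weight-lifting construction commutes with dilation precisely when $\mathbf{e}=\mathbf{0}$.

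First I would unpack the defining system of $P^*$. Writing the constraint $\mathbf{A}^{*}\binom{\mathbf{x}}{\mathbf{y}}=\binom{\mathbf{b}}{-\mathbf{e}}$ blockwise, the top block reads $\mathbf{Ax}=\mathbf{b}$ (the top-right block of $\mathbf{A}^{*}$ being zero), while the bottom block reads $[\mathbf{d_1}\,\cdots\,\mathbf{d_n}]\,\mathbf{x}-\mathbf{Cy}=-\mathbf{e}$, i.e.\ $\mathbf{Cy}=\sum_{i=1}^{n}x_i\mathbf{d_i}+\mathbf{e}$. Together with $\mathbf{x}\ge\mathbf{0}$ and $\mathbf{y}\ge\mathbf{0}$, this shows that a pair $(\mathbf{x},\mathbf{y})$ lies in $P^{*}$ exactly when $\mathbf{x}\in P$ and $\mathbf{y}\in Q(x_1,\dots,x_n)$. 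I would then carry out the double count: partitioning $P^{*}\cap\Z^{n+m}$ by the value of $\mathbf{x}\in\Z^{n}$, the fiber over $\mathbf{x}$ is empty unless $\mathbf{Ax}=\mathbf{b}$ and $\mathbf{x}\ge\mathbf{0}$ --- that is, unless $\mathbf{x}\in P\cap\Z^{n}$ --- and in that case the fiber is precisely $Q(x_1,\dots,x_n)\cap\Z^{m}$, whose cardinality is by definition $w(\mathbf{x})$. Summing over the fibers gives $|P^{*}\cap\Z^{n+m}|=\sum_{\mathbf{x}\in P\cap\Z^{n}}w(\mathbf{x})$, which is part (1).

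For part (2), I would apply part (1) not to $P$ but to its dilate $tP=\{\mathbf{x}\mid \mathbf{Ax}=t\mathbf{b},\ \mathbf{x}\ge\mathbf{0}\}$, which is again a polytope of the required form with $\mathbf{b}$ replaced by $t\mathbf{b}$ and with the same weight data $(\mathbf{C},\mathbf{d_i},\mathbf{e})$, hence the same weight $w$. Part (1) then yields $\weightsum{P}{w}{t}=\sum_{\mathbf{x}\in tP\cap\Z^{n}}w(\mathbf{x})=|(tP)^{*}\cap\Z^{n+m}|$, where $(tP)^{*}$ has constraint matrix $\mathbf{A}^{*}$ and right-hand side $\binom{t\mathbf{b}}{-\mathbf{e}}$. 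When $\mathbf{e}=\mathbf{0}$ this right-hand side equals $t\binom{\mathbf{b}}{\mathbf{0}}$; since the feasible cone $\{\mathbf{x}\ge\mathbf{0},\,\mathbf{y}\ge\mathbf{0}\}$ is invariant under positive scaling and the affine constraint scales linearly, I would conclude $(tP)^{*}=t(P^{*})$. The homogeneity of $w$ follows from the parallel observation that $Q(tx_1,\dots,tx_n)=t\,Q(x_1,\dots,x_n)$ as polytopes when $\mathbf{e}=\mathbf{0}$. Chaining these identities gives $\weightsum{P}{w}{t}=|t(P^{*})\cap\Z^{n+m}|=\weightsum{P^*}{1}{t}$.

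I do not expect a serious obstacle here: the whole argument is a fibration-plus-scaling bookkeeping rather than a deep computation. The only points requiring genuine care are getting the block decomposition and the sign of $\mathbf{e}$ correct so that the bottom block reproduces exactly the defining equations of $Q(x_1,\dots,x_n)$, and confirming that dilation of the base polytope corresponds exactly to scaling the augmented right-hand side --- which is precisely where the hypothesis $\mathbf{e}=\mathbf{0}$ is essential, since a nonzero $\mathbf{e}$ would fail to scale with $t$ and break the identity $(tP)^{*}=t(P^{*})$.
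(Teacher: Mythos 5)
Your proposal is correct and follows essentially the same route as the paper's proof: part (1) is the same fibration argument (the paper phrases it via the projection $\pi\colon P^{*}\to P$, $(\mathbf{x},\mathbf{y})\mapsto\mathbf{x}$, and identifies each fiber over an integer point $\mathbf{x}\in P\cap\Z^{n}$ with $Q(x_{1},\ldots,x_{n})\cap\Z^{m}$), and part (2) is the same observation that the augmented right-hand side $\bigl(t\mathbf{b},-\mathbf{e}\bigr)$ scales linearly exactly when $\mathbf{e}=\mathbf{0}$, giving $(tP)^{*}=t(P^{*})$. No gaps.
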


\begin{rem}
To the best of our knowledge the first weaker version of Theorem \ref{polytopelifting} appeared in print in work by Ardila and Brugall\'e (see \cite[Section 4]{FedericoErwan}). Unlike our paper, in \cite{FedericoErwan} the weights $w(x)$ were special polynomials, not quasipolynomials, and in that case some of the applications we show here were not possible.
In Section \ref{pruebas}, we present a direct constructive/algorithmic proof of Theorem \ref{polytopelifting} and describe several interesting special cases depending on the type of Ehrhart quasi-polynomials (in particular, we recover the results of \cite{FedericoErwan}). 
\end{rem}

\begin{rem}
Theorem \ref{polytopelifting} uses special weights that are by construction non-negative. But we note that most of the proof of the theorem works even when $w(x)$ takes negative or zero values over $P$. The function $\weightsum{P}{w}{t}$ still makes sense, but what we obtain is not a traditional Ehrhart polynomial, because, for example, the leading coefficient could be negative, and volumes are never negative. 
\end{rem}

\begin{rem}
 Theorem \ref{polytopelifting} says the weight $w(x)$ can be any Ehrhart quasi-polynomial. In Section \ref{pruebas}, we carefully discuss many ways to express polynomials in terms of these quasi-polynomial weights. A key point of our paper is that Theorem \ref{polytopelifting} is more versatile and expressive because it applies to more functions than just polynomial weights (a restriction in \cite{FedericoErwan}). In fact, in Section \ref{pruebas} we show that for one $w$ can have many different representations (e.g., polynomials), some more efficient than others. To demonstrate the power of our contributions Section \ref{applications} presents applications to Combinatorial Representation Theory and Number Theory. 
\end{rem}

Corollary \ref{integrate-maximize}  below is a notable new consequence of Theorem \ref{polytopelifting} that can be applied to many problems of interest. For example, these ideas can be applied to integration and maximization of Kostka numbers, Littlewood--Richardson coefficients, and any other combinatorial invariant that is given by an Ehrhart quasi-polynomial. 

\begin{cor} \label{integrate-maximize} Let $w$ be a weight obtained from an Ehrhart quasi-polynomial function of a parametric polyhedron $Q$, 
whose parameters are defined over the lattice points of a polytope $P$. Here $P,Q,w$ are just as in the second part of Theorem \ref{polytopelifting}. Using the weight lifting polytope construction of Theorem \ref{polytopelifting} one can integrate and maximize $w$ over $P$ as follows:
\begin{itemize}
\item One can compute the integral $\int_P w(x) dx$ reformulated as a volume computation of the weight lifting polytope $P^*$.

\item One can solve the maximization problem and determine $max_{\alpha \in P \cap \Z^n} w(\alpha)$. It reduces to counting the lattice points of a finite sequence of weight lifting polytopes which contain each other and can be read from $P^*$ efficiently. 
\end{itemize}
\end{cor}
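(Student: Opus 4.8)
The plan is to deduce both assertions from the single quasi-polynomial identity $\weightsum{P}{w}{t} = \weightsum{P^*}{1}{t}$ furnished by the second part of Theorem \ref{polytopelifting}. For the integration statement I would first invoke the fact recalled in the introduction, that the leading coefficient of the weighted Ehrhart quasi-polynomial $\weightsum{P}{w}{t}$, namely the coefficient of $t^{\dim P + \deg w}$, equals $\int_P w(x)\,dx$. On the other side, $\weightsum{P^*}{1}{t}$ is the ordinary Ehrhart quasi-polynomial of $P^*$, so its leading coefficient is the relative (lattice-normalized) volume of $P^*$ computed in the affine lattice spanned by $P^*\cap\Z^{n+m}$. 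Since the two functions of $t$ agree for every $t\in\N$, they agree as quasi-polynomials, hence have the same degree and the same leading coefficient, and comparing the two expressions yields
$$\int_P w(x)\,dx = \operatorname{vol}(P^*).$$
The only points needing care here are checking that $\dim P^* = \dim P + \deg w$ (so the degrees genuinely match) and the bookkeeping of the lattice normalization of the volume; both are routine once the equality of quasi-polynomials is in hand.

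For the maximization, write $M = \max_{\alpha\in P\cap\Z^n} w(\alpha)$ and $N = |P\cap\Z^n|$. The key observation is that each power $w^k$ is again an Ehrhart counting function of the type handled by Theorem \ref{polytopelifting}: the $k$-fold product $Q(\alpha)\times\cdots\times Q(\alpha)\subset\R^{km}$ is a parametric polytope with the \emph{same} parameters $\alpha$, whose constraint matrix is block diagonal with $k$ copies of $\mathbf{C}$ (and $\mathbf{e}=0$ preserved), and whose lattice-point count is $|Q(\alpha)\cap\Z^m|^k = w(\alpha)^k$. Applying Theorem \ref{polytopelifting} to this product produces a weight lifting polytope $P^{*[k]}\subset\R^{n+km}$, obtained from $P^*$ by simply stacking $k$ copies of the $Q$-block and related to $P^{*[k-1]}$ by forgetting the last block of $\mathbf{y}$-coordinates, so these polytopes are read off directly from $P^*$ and nest under the coordinate projections. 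They satisfy
$$\big|P^{*[k]}\cap\Z^{n+km}\big| = \sum_{\alpha\in P\cap\Z^n} w(\alpha)^k =: S_k.$$

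To recover $M$ from the counts $S_k$ I would use the elementary sandwich $M^k \le S_k \le N\,M^k$, valid because at least one summand equals $M^k$ and there are $N$ summands each at most $M^k$. Taking $k$-th roots gives $M \le S_k^{1/k} \le N^{1/k}M$, so $S_k^{1/k}$ decreases to $M$ (this is just convergence of the $\ell^k$ norms to the $\ell^\infty$ norm). Since the values $w(\alpha)$ are integers it suffices to push $k$ past the explicit threshold where $N^{1/k}M < M+1$; using $\ln(1+1/M)\ge 1/(M+1)$ together with the crude bound $M\le S_1 = |P^*\cap\Z^{n+m}|$, any $k > (S_1+1)\ln N$ works, and then $M = \lfloor S_k^{1/k}\rfloor$. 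Thus evaluating the lattice-point counts of the finite nested family $P^* = P^{*[1]}, P^{*[2]},\ldots,P^{*[K]}$ determines the maximum.

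I expect the maximization to be the main obstacle, as the integration claim is essentially a one-line comparison of leading coefficients once the theorem's quasi-polynomial identity is granted. The substance of the second part is twofold: first, recognizing that $w^k$ stays inside the class of weights to which Theorem \ref{polytopelifting} applies, which the product construction settles cleanly; and second, producing an honest a priori finite stopping rule, since the convergence $S_k^{1/k}\to M$ is only geometric in the gap between $M$ and the second-largest attained value, so one must bound both $N$ and $M$ to guarantee that the integer $\lfloor S_k^{1/k}\rfloor$ has stabilized to $M$ after finitely many steps.
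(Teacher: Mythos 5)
Your proposal is correct and follows essentially the same route as the paper: for the integral, compare leading coefficients across the identity $L_P(w,t)=L_{P^*}(1,t)$; for the maximum, use the $\ell^k$-norm sandwich $M^k \le S_k \le N\,M^k$ (equivalently the paper's bounds $L_k \le M \le U_k$) applied to the lattice-point counts of lifted polytopes for the powers $w^k$. Your two additions sharpen rather than change the argument: the explicit block-diagonal construction realizing $w(\alpha)^k$ as the Ehrhart function of the product $Q(\alpha)^k$ is actually cleaner than the paper's appeal to Corollary \ref{lifting} (which is stated only for weights $\prod_i w_i(x_i)$ in separate coordinates, not for powers of a multivariate weight), and your a priori stopping threshold $k > (S_1+1)\ln N$, exploiting integrality of $w$, makes termination explicit where the paper merely stops once the rounded upper and lower bounds differ by less than $1$.
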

We prove this result in Section \ref{pruebas}.
But first, we give a survey of weighted sums of lattice points inside a polytope and of the many contributions of Mich\`ele Vergne to this fascinating subject.

\subsection{Mich\`ele and Lattice Points in Polytopes} 
Mich\`ele Vergne has had several important contributions to the investigation of $L_P(w,t)$ which will be described briefly here.
We do this more or less in chronological order, but we highlight two main branches: structural results and applications.

First, let us review Mich\`ele's work on the analytic, algebraic, and combinatorial structure of weighted sums of lattice points. 

An initial inspiration was perhaps the early 1990's work by Khovanskii and co-authors \cite{Khovanskii+Pukhlikov1992ONE,Khovanskii+Pukhlikov1992TWO,Kantor+Khovanskii1993} who established special cases of the Euler-Maclaurin summation formula valid for certain \emph{unimodular lattice polytopes}, namely those for which  the primitive vectors on edges through each vertex of $P$ form a basis of the lattice. (These are sometimes called \emph{smooth polytopes} as they give smooth toric varieties. Flow polytopes are a key example.) The great contribution of Brion and Vergne was to generalize that work in their lovely papers \cite{brionvergne96french,brionvergne97}
to more general convex rational polytopes.

While we mostly work over the integer lattice and study simple 
dilations of a single polytope (as we explained in the introduction), all results can be stated much more generally. 
Consider an arbitrary lattice $L$ inside a finite-dimensional real vector space 
$V$ and let $P\subset V$ be a convex polytope with nonempty interior whose vertices are all rational with respect to $L$. A useful way to represent a polytope is in the form $P=\{ x: Ax=b, x\geq 0\}$ for a matrix $A$ and a vector $b$. Indeed, pick $A=(\alpha_k)_{1\leq k\leq N}$ where the columns $\alpha_i$ are a finite sequence of elements of $L$, all lying in the same open half-space or, equivalently, generating a pointed convex cone inside $V$. 
Note that for each $b\in L$ we have a (parametric) polytope 
$$P_A(b)=\{(x_1,\cdots,x_N)\in{\R}^N|\ \sum_{k=1}^Nx_k\alpha_k=b, \ x_k\geq0\}.$$ As $b$ changes we obtain many different types of polytopes! Observe that $P_A(b)$ is empty unless $b$ belongs to the cone generated by $\alpha_1, \cdots, \alpha_m$.

Besides the polytope $P_A(b)$ changing with $b$, it is important to note that for all $b \in L$, the equation $\sum_{k=1}^N x_k\alpha_k=b$ has only a finite number of non-negative integer solutions inside the polytope $P_A(b)$ for a fixed $A$. The function $L_A(b)$ counting the number of solutions depends on $b$, and it is called the \emph{vector partition function} associated to the matrix $A$. Geometrically $L_A(b)$ counts the number of lattice points inside the (parametric) polytope $P_A(b)$. 
We are interested in $L_A(b)$, and in the introduction we looked at one way to evaluate it.

As $b$ varies, the facets of $P_{A}(b)$ move parallel to themselves, so that the combinatorial type of the polytope $P_A(b)$ may 
change although there are only finitely many possible combinatorial types. 
Since the work of P. McMullen \cite{McMullen1978}, it is known that there is polyhedral complex, called the \emph{chamber complex}, that decomposes the cone $C(A)$ generated by the columns of $A$ as a union of finitely many polyhedral cones called \emph{chambers}. The chamber complex organizes the behavior of the number of integral points in a rational polytope when the facets of the polytope are translated, which corresponds to $b$ changing. Note that in the introduction we change $b$ by the simple (dilation) scaling $tb$. As we explain below the vector partition function $L_A(b)$ is given by a quasi-polynomial within each chamber (and in fact in a neighborhood of it as Mich\`ele and collaborators were able to show).

A related important function of $b$ is ${Vol}_A(b)$, denoting the volume of the polytope $P_A(b)$ normalized by choosing the Lebesgue measure 
on $V$ so that $V/L$ has volume 1 and ${\R}^N$ has the standard Lebesgue measure. Famously, Brion and Vergne investigated the volume and the partition functions. In fact, they expressed $L_A(b)$ in terms of the values of the function ${Vol}_A(b)$ and its derivatives at $b$. The volume function ${Vol}_A(b)$ is continuous and piecewise-polynomial in the variables $b$.  
The polynomial pieces are associated with each chamber. The function $L_A(b)$ is not continuous, but it is piece-wise quasi-polynomial in the same chambers.

The key idea was for Brion and Vergne to study the function $L_A(b)$ through the generating function $$L_A(y,b)=\sum_{x \in {\Z}^m_+\colon \ x_1 \alpha_1 + \cdots +x_m \alpha_m=b} \exp\{-\langle x, y \rangle\}$$ 
in $y=(y_1, \cdots, y_m) \in {\R}^m$
and its continuous version 
$${Vol}_A(y,b)=\int_{P_A(b)} \exp\{-\langle y, s \rangle\} \ ds$$ 
for $b, y \in {\R}^m$, where $\langle \cdot, \cdot \rangle$ is the scalar product on ${\R}^m$. They proved formulas for $L_A (y,b)$ and ${Vol}_A(y,b)$, expressing the quantities as finite sums of certain ``simple fractions'' at the vertices. Note that when $y=0$ we recover the usual volume and lattice point enumeration functions. Indeed one can use
\emph{Brion's cone decomposition theorem} \cite{Brion88}, which gives a conic decomposition of the generating functions of a polytope as a sum of rational functions
at each vertex. The rational functions are read directly from the tangent cones at each vertex.  These formulas depend both on the combinatorics (vertices and edges) of the polytope $P_A(b)$ as well as on their ``arithmetic'' complexity (the columns of $A$ and the lattice points inside fundamental parallelepipeds). In particular, the formulas contain some summations over finite Abelian groups that are factors of subgroups generated by $\alpha_k$ modulo the lattice $L$. The formulas are interpreted as residue formulas for functions of several complex 
variables as well as some identities of valuations and the polytope algebra \cite{barvinokzurichbook}. The connection between the two interpretations 
can be obtained via the Fourier transform, which transforms the characteristic functions of polyhedra into meromorphic functions (combinations of $\exp$ and rational functions). 

The computation of both $L_A(b)$ and ${Vol}_A(b)$ is based on generating-function methods; one tries to invert the Laplace transforms $$ \int_{ C(A) } {Vol}_A(b) \, e^{ - \left< b,z \right> } db = {1 \over { \prod_{ \alpha } \left< \alpha,z \right>}}, $$ and $$ \sum_{ b \in C(A) \cap { \Z}^n } L_A(b) \, e^{ - \left< b,z \right> } = {1 \over { \prod_{ \alpha } \left( 1 - e^{ - \left< \alpha,z \right> } \right) }}, $$ where $C(A)$ denotes the cone spanned by the column vectors of $A$ and the products are over the column vectors $\alpha$ of $A$. The Laplace transform inversion is done using Jeffrey-Kirwan residues (see \cite{ratsI,ratsII} and references there).

The study of such rational functions with poles over hyperplanes and 
polyhedral expressions was continued in several of Mich\`ele's papers, for example in \cite{ratsI,ratsII}, and in \cite{szenesvergne2003}
 where Szenes and Vergne considered a weighted extension of the (unweighted) vector partition function $L_A(b)$. They choose an exponential-polynomial weight function $w \colon {\R}^N \to {\C}$, that is, a linear combination of polynomials with exponential coefficients of the type $e^{ l(x)}$, where $ l\colon {\R}\to {\C}$ is a linear function, and consider the sum $L_A(w,b)$ of $w(x)$ over all integer points $x$ in the polytope $P_A (b)$. 
They showed that for a fixed $w$, the value of $L_A(w,b)$ as a function 
of $b$ is given by exponential-polynomial functions on the chambers of 
the cone generated by the columns of $A$. 
They established again a residue formula for $L_A(w,b)$ that allows them 
to compute it explicitly in some cases and show that the same exponential-polynomial function gives the correct formula when $b$ lies in a certain neighborhood of a chamber, even when the combinatorial type of the polytope $P_A(b)$ changes. 

Our paper deals with the situation when $w$ is a quasi-polynomial, but 
as we hinted in the introduction, the general theory that describes the \emph{weighted vector partition function} $L_A(w,b)$ as a piecewise quasi-polynomial function can be more directly explained for dilations of a polytope. Throughout the paper, we say that a polytope is rational if its vertices have rational coordinates. Let $P \subset {\Bbb R}^d$ be a rational polytope 
and let $w \colon {\Bbb R}^d \longrightarrow {\Bbb R}$ be a polynomial weight function. The sum $$ L_P(w,1)=\sum_{x \in P \cap {\Bbb Z}^d} w(x) $$ of the polynomial $w$ 
over the integer points $x$ in $P$ behaves as a quasi-polynomial when $P$ 
is dilated. That is, the function $t \longmapsto L_P(w,t)$ is a polynomial 
in $t \in {\Bbb N}$ with periodic coefficients. 

Berline and Vergne developed in \cite{Berline-Vergne-2007} a formula for the sum of the values of a polynomial weight function $w$ on $\R^d$ over the integer points in a rational convex polytope $P$. 
This takes the elegant form 
$$ \sum_{x\in P\cap\Bbb{Z}^d}\,w(x)=\sum_{f\in F(P)}\,\int_{f}\,D(P,f)\cdot w,$$ 
where $F(P)$ is the set of faces of $P$ and $D(P,f)$ is a differential operator of infinite order with constant coefficients on $\R^d$. For the constant function $h = 1$ (that is, the lattice point enumerator), the formula takes the form $$\# (P\cap\Z^d)=\sum_{f\in F(P)}\,\nu_0(P,f){\rm vol}(f), $$ where the coefficients $\nu_0(P,f)$ are rational numbers. This type of enumeration formula has been known since the work of McMullen and others (see e.g., the discussion in \cite{BarviPom,barvinokzurichbook}).
An important point of Mich\`ele's work (in contrast to other treatments) is that there is an algorithm which computes the $m$ terms of $D(P,f)$ of lowest order in polynomial running time with respect to the size of the data determining $P$, at least if $m$ and $d$ are fixed; this extends a well-known polynomial time algorithm of Barvinok \cite{barvinokehrhart94} for the lattice point enumerator. 

As we saw earlier the vector partition function $L_A(b)$ is a piecewise-defined quasi-polynomial, i.e., there exist polyhedral regions in ${\R}^d$, the chambers, such that $P_A(b)$ coincides with a fixed quasi-polynomial if $b$ is restricted to one chamber.    
However, if $b$ ``hits the wall'' between two adjacent chambers, so that the combinatorics of $P_A(b)$ changes, the vector partitition function does not have to remain  the same quasi-polynomial, but, interestingly, it often does. This is investigated in \cite{boysalvergne2012} where Boysal and Vergne expressed the difference of two such quasi-polynomials from adjacent regions of the chamber complex as a convolution of distributions. It was also considered later in \cite{wall-analytic2012}.

The leading term of the quasi-polynomials expressing $L_A(b)$ is the volume (in the unweighted case), or the integral of the weight function (in the weighted case), of the corresponding polytope $P_A(b)$. In the paper \cite{BBDKV08}, 
Mich\`ele and collaborators obtained optimal algorithms for the problem of integrating a polynomial function over a rational simplex (and thus a polytope). While they proved that the problem is NP-hard for arbitrary polynomials, on the other hand, if the polynomial depends only on a fixed number of variables, while its degree and the dimension of the simplex are allowed to vary, they showed that integration can be done in polynomial time. As a consequence, for polynomials of fixed total degree, there is a polynomial time algorithm as well. 

The work in \cite{BBDKV08} is only about the computation of volumes and integrals, i.e., the leading coefficients of the quasi-polynomials that described the vector partition function $L_A(w,b)$, but in \cite{BBDKV-highest2012} Mich\`ele and collaborators look at the coefficients of lower order terms of Ehrhart quasi-polynomials too. They showed that if $P$ is a simple polytope given by its vertices, then any number $k$, fixed in advance, of the highest terms of the quasi-polynomial $t \longmapsto L(tP, w)$ can be computed in polynomial time. Those terms are computed both as step-polynomials (polynomials in residue classes modulo some integers) as well as by their rational generating functions. 

The algorithm is based on the analysis of ``intermediate valuations''. These are sums of the form $$ \sum_{A=L+m } \int_{P \cap A} e^{\ell(x)} \ dx, $$ where the sum is taken over all lattice translates $A=L+m$, $m \in {\Bbb Z}^d$, of a fixed rational subspace $L \subset {\Bbb R}^d$ and $\ell \colon {\Bbb R}^d \longrightarrow {\Bbb R}$ is a fixed linear function. These intermediate valuations interpolate between exponential sums when $\dim L=0$ and exponential integrals when $\dim L=d$ and generalize similar intermediate objects considered by Barvinok. Intermediate sums or valuations were also studied by Mich\`ele and collaborators in several papers including \cite{inter1,inter2,3EhrhartQPs}.

The second branch of  Mich\`ele's work has included direct applications of the theory she and others developed. Indeed, we have discussed the structural theory of the vector partition functions $L_A(b)$, but mathematicians have in mind several crucial examples for applications. An important family of matrices $A$ corresponds to columns being  roots of type $A_n$ Lie algebras. These matrices correspond to the node arc incidence matrix  of directed networks and have important applications in optimization, statistics, and graph theory, and also in representation theory. The associated polytopes are the \emph{flow polytopes}.  The paper \cite{countingflows2004} 
discusses algorithms and software for the enumeration of all integral flows inside a network. The methods are again based on the study of rational functions with poles on arrangements of hyperplanes that appear in the $A$-type reflection groups. 
Similarly, in the paper \cite{BaldoniVergne-Kostant2008}
Baldoni and Vergne studied flow polytopes and the most famous connection to representation theory, the vector partition function of the directed acyclic complete graph, which is known as the Kostant partition function. They re-proved, using their rational function techniques, two famous results. First, they recover a formula discovered by Chan, Robbins, and Yuen (later proved by Doron Zeilberger). Then they generalize a formula due to Lidskiĭ relating the partition function to the volume.
In \cite{rootsvpartitions}, the authors apply these kinds of techniques in the more general setting of representation theory of Lie algebras, and the vector partition functions appear for all other classical root systems.

The connection to representation theory and lattice points continues to interest Mich\`ele. This is evident in the more recent papers \cite{BerlineVergneWalter2017, BaldoniVergneKronecker2018}. In the latest of these, the topic is
the decomposition of the symmetric algebra of ${\Bbb C}^N$ in terms of indecomposable $U(n_1)\otimes\cdots\otimes U(n_t)$-modules, which is determined by the Kronecker coefficients $g(\nu_1,\ldots,\nu_t)$ where $\nu_i$ is a partition of $n_i$. Baldoni and Vergne considered the dilation function ${k\to g(k\nu_1,\ldots,k\nu_t)}$ for an arbitrary fixed $t$-tuple of partitions. These functions are called \emph{dilated Kronecker coefficients}, and the theory of lattice point sums over polytopes gives an algorithm for computing them as a quasi-polynomial function of $k$. 
In the topic of representation theory, several algebraic invariants have a clear interpretation as the number of lattice points inside an alternating sum of several polytopes.

Finally, yet another application is in number theory, in the study of compositions of integers \cite{SylvesterDenumerant2015}. (We will see more on this exciting topic later in Section \ref{WENT}). For a given sequence  $\boldsymbol\alpha=[\alpha_1,\alpha_2,\dots, \alpha_{N+1}]$ of $N+1$ positive integers, one can  consider the combinatorial function $E(\boldsymbol\alpha)(t)$ that counts the non-negative integer solutions of the equation $\alpha_1x_1+\alpha_2x_2+\dots+\alpha_N x_N+\alpha_{N+1}x_{N+1}=t$, where the right-hand side $t$ is a varying non-negative integer. This is clearly a very special case of Ehrhart functions and in combinatorial number theory it is known as the \emph{Sylvester's denumerant}. Mich\`ele and collaborators used all the general theory in this case to give efficient algorithms to compute the highest $k+1$ coefficients of the Sylvester denumerant quasi-polynomial $E(\boldsymbol\alpha)(t)$ as step polynomials of $t$ (a simpler and more explicit representation). 

It is worth stressing that experimentation and concrete computation is crucial in this type of work and most implementations and experiments appeared in LattE integrale \cite{latte-1.2}.

\section{Proofs of Theorem \ref{polytopelifting} and sketch of other results} \label{pruebas}

Here we present proofs of Theorem \ref{polytopelifting} and some variations of it.

\begin{proof}[Proof of Theorem \ref{polytopelifting}]
Note that there is a natural projection map $  \pi: P^{*} \to P $ via $  (\mathbf{x}, \mathbf{y} ) \mapsto \mathbf{x}$. 
It suffices to show that for any fixed $\mathbf{x} \in P\cap \Z^{n}$, $w(\mathbf{x}) = |\pi^{-1}(\mathbf{x}) \cap \Z^{n+m}|.$
Recall that $(\mathbf{x}, \mathbf{y} ) \in \pi^{-1}(\mathbf{x})  $ if and only if $\mathbf{A}\mathbf{x} = \mathbf{b}$ and $\mathbf{Cy} = \sum_{i=1}^{n}x_{i}\mathbf{d_{i}} + \mathbf{e}$ where $\mathbf{x}\geq \mathbf{0},\mathbf{ y} \geq \mathbf{0}$. Given $\mathbf{x} \in P \cap \Z^{n}$, we see that  $(\mathbf{x},\mathbf{y} ) \in \pi^{-1}(\mathbf{x}) \cap \Z^{n+m} $ if and only if $ \mathbf{y} \in  Q(x_{1},\ldots,x_{n}) \cap \Z^{m}$. Hence, for a fixed $\mathbf{x} \in P \cap \Z^{n}$,
    \[ |\pi^{-1}(\mathbf{x}) \cap \Z^{n+m}|
    =  | Q(\mathbf{x}) \cap \Z^{m}|
    = w(\mathbf{x}).\]

We now consider second part of Theorem \ref{polytopelifting}. 
If $P = \{ \mathbf{x} : \mathbf{Ax} = \mathbf{b}, \mathbf{x} \geq \mathbf{0} \}$, then
$$P^{*} = \left\{ \left. \begin{pmatrix} \mathbf{x} \\ \mathbf{y} \end{pmatrix} \right|   \mathbf{A}^{*}\begin{pmatrix} \mathbf{x} \\ \mathbf{y} \end{pmatrix} = \begin{pmatrix} \mathbf{b} \\ -\mathbf{e} \end{pmatrix} ,\mathbf{x} \geq \mathbf{0}, \mathbf{y} \geq \mathbf{0} \right\}.$$
Therefore, $tP = \{ \mathbf{x} : \mathbf{Ax} = t\mathbf{b}, \mathbf{x} \geq \mathbf{0} \}$ and
$$(tP)^{*} = \left\{ \left. \begin{pmatrix} \mathbf{x} \\ \mathbf{y} \end{pmatrix} \right|   \mathbf{A}^{*}\begin{pmatrix} \mathbf{x} \\ \mathbf{y} \end{pmatrix} = \begin{pmatrix} t\mathbf{b} \\ -\mathbf{e} \end{pmatrix} ,\mathbf{x} \geq \mathbf{0}, \mathbf{y} \geq \mathbf{0} \right\}.$$
Given $\mathbf{e} = 0$, we can see that $(tP)^{*} =t(P^{*}).$
By the first part of the proof we conclude
\begin{equation*}
    \weightsum{P}{w}{t} = |(tP)^{*}\cap\mathbb{Z}^{n+m}| = |t(P^{*})\cap\mathbb{Z}^{n+m}| = \weightsum{P^*}{1}{t}. \qedhere
\end{equation*}
\end{proof}

Now we outline more results and corollaries of Theorem \ref{polytopelifting}  obtained by considering various weight functions. In order to do so we will be scaling polytopes by integers instead of only non-negative integers. Let us recall what this means.

\begin{definition}\label{def:Q}
Consider the rational polytope $Q = \{\mathbf{y} \mid \mathbf{C}\mathbf{y} = \mathbf{d}, \mathbf{y} \geq \mathbf{0}\} \subseteq \mathbb{R}^m$ where $\mathbf{C} \in \Z^{r \times m}, \mathbf{d} \in \Z^{r}$.
For every integer $t$, we define $tQ = \{\mathbf{y} \mid \mathbf{Cy} = t \mathbf{d}, \mathbf{y} \geq \mathbf{0}\}$.
\end{definition}
\begin{example}
Consider the $(m-1)$-dimensional standard simplex 
\[
\Delta_{m-1}= \{\mathbf{y} \mid y_{1}+\cdots+y_{m} = 1, y_{i} \geq 0 \}.
\] 
Then $-2 \Delta_{m-1} = \{\mathbf{y} \mid y_{1}+\cdots+y_{m} = -2\cdot 1, y_{i} \geq 0 \}$.
\end{example}
\begin{definition}
A function $w(t)$ is a \textbf{late-dilated} Ehrhart quasi-polynomial if
$$w(t) = |(t-c) Q \cap \Z^{m}|,$$
where $c\in\Z$ and $Q$ is a rational polytope of the form given in Definition \ref{def:Q}.
\end{definition}
\begin{example}
The function $\binom{t}{m-1}$ is a {late-dilated} Ehrhart polynomial in the variable $t$, because $\binom{t}{m-1}  = |(t-m+1) \Delta_{m-1} \cap \Z^{m}|$.
\end{example}

\begin{cor}\label{lifting} \quad
Let $w_{1},w_{2},\cdots,w_{n}$ be $n$ {late-dilated} Ehrhart quasi-polynomials, i.e., $w_{i}(t) = |(t-c_{i}) Q_{i} \cap \Z^{m_{i}}|$ where $Q_{i} = \{ \mathbf{y_{i}} \mid \mathbf{C_{i}y_{i}} = \mathbf{d_{i}}, \mathbf{y_{i}} \geq \mathbf{0} \}$ and $\mathbf{C_{i}} \in \Z^{r_{i} \times m_{i}}, \mathbf{d_{i}} \in \Z^{r_{i}}, c_{i} \in \Z$. Consider a rational polytope of the form $P = \{ \mathbf{x} \mid \mathbf{Ax} = \mathbf{b}, \mathbf{x} \geq \mathbf{0} \} \subseteq \mathbb{R}^n$ where $\mathbf{A} \in \Z^{s\times n}, \mathbf{b} \in \Z^{s}$ and the multivariate function $w( \mathbf{x} ) = \prod_{i=1}^{n} w_{i}(x_{i})$. 
There exists a \emph{weight lifting polytope} $P^{*}\subseteq \mathbb{R}^{n^*}$ of $P$, where $n^{*} = n + m_{1} + \cdots + m_{n}$,  such that
$$  \sum_{\mathbf{x} \in P \cap \Z^{n}}w(\mathbf{x}) =  \left| P^{*} \cap \Z^{n^{*}} \right|.$$
\end{cor}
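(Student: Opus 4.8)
The plan is to recognize the product weight $w(\mathbf{x}) = \prod_{i=1}^{n} w_i(x_i)$ as the multivariate Ehrhart quasi-polynomial of a single parametric polytope and then invoke Theorem \ref{polytopelifting}(1) essentially verbatim. The starting observation is that a product of lattice-point counts is the lattice-point count of a Cartesian product: for each fixed integer vector $\mathbf{x}$,
$$\prod_{i=1}^{n} \bigl|(x_i-c_i)Q_i \cap \Z^{m_i}\bigr| = \Bigl|\bigl((x_1-c_1)Q_1 \times \cdots \times (x_n-c_n)Q_n\bigr) \cap \Z^{m_1+\cdots+m_n}\Bigr|,$$
since a lattice point of the product is exactly a tuple consisting of one lattice point in each factor.

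So I would set $Q(x_1,\ldots,x_n) := (x_1-c_1)Q_1 \times \cdots \times (x_n-c_n)Q_n$ and rewrite it in the exact form required by Theorem \ref{polytopelifting}. Concretely, write $\mathbf{y}=(\mathbf{y}_1,\ldots,\mathbf{y}_n)$ and let $\mathbf{C} := \mathrm{diag}(\mathbf{C}_1,\ldots,\mathbf{C}_n)$ be the block-diagonal matrix of size $(\sum_i r_i)\times(\sum_i m_i)$. Because $(x_i-c_i)Q_i = \{\mathbf{y}_i \mid \mathbf{C}_i\mathbf{y}_i = x_i\mathbf{d}_i - c_i\mathbf{d}_i,\ \mathbf{y}_i\ge 0\}$, the block-$i$ constraint of $Q(\mathbf{x})$ is $\mathbf{C}_i\mathbf{y}_i = x_i\mathbf{d}_i - c_i\mathbf{d}_i$. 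Collecting these, $Q(\mathbf{x})=\{\mathbf{y}\mid \mathbf{C}\mathbf{y}=\sum_{i=1}^{n} x_i\,\hat{\mathbf{d}}_i+\mathbf{e},\ \mathbf{y}\ge 0\}$, where $\hat{\mathbf{d}}_i\in\Z^{\sum_j r_j}$ has $\mathbf{d}_i$ in its $i$-th block and zeros elsewhere, and $\mathbf{e}\in\Z^{\sum_j r_j}$ has $-c_i\mathbf{d}_i$ in its $i$-th block. Here the shifts $c_i$ of the late-dilations get absorbed into the single constant vector $\mathbf{e}$; this is the one place the late-dilation hypothesis is genuinely used. By the factorization above, $|Q(\mathbf{x})\cap\Z^{m}| = \prod_i w_i(x_i) = w(\mathbf{x})$ for every $\mathbf{x}\in\Z^{n}$, so $w$ is indeed the multivariate Ehrhart quasi-polynomial of this parametric family, with $m = m_1+\cdots+m_n$.

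With $P$, $Q(\mathbf{x})$, and $w$ now in precisely the shape of the hypotheses of Theorem \ref{polytopelifting}, I would apply part (1) directly: it produces a weight lifting polytope $P^{*}\subseteq\R^{n+m}$ with $n+m = n + m_1+\cdots+m_n = n^{*}$ and $\sum_{\mathbf{x}\in P\cap\Z^{n}} w(\mathbf{x}) = |P^{*}\cap\Z^{n^{*}}|$, which is the assertion. I do not expect a serious obstacle, since this is genuinely a corollary; essentially all the content lies in the block-matrix bookkeeping. The step most likely to require care is checking that negative dilations cause no trouble: when a factor $(x_i-c_i)Q_i$ has no nonnegative solutions the corresponding $w_i(x_i)$ vanishes, in agreement with the convention of Definition \ref{def:Q}, and the product-polytope count is likewise $0$, so the two sides stay consistent. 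I would also flag the notational clash between the symbol $\mathbf{d}_i$ used for the right-hand sides of the $Q_i$ and the parametric coefficient vectors of Theorem \ref{polytopelifting}; in the reduction these are distinct objects, which is why I write $\hat{\mathbf{d}}_i$ for the latter.
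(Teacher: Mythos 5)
Your proposal is correct and follows essentially the same route as the paper: both form the parametric polytope $Q(x_1,\ldots,x_n)=\prod_{i=1}^n (x_i-c_i)Q_i$ with block-diagonal constraint matrix, parametric vectors supported on the $i$-th block, and constant vector $\mathbf{e}$ whose $i$-th block is $-c_i\mathbf{d}_i$, and then invoke Theorem \ref{polytopelifting}(1). Your extra remarks on empty factors and the notational clash are sound but not needed beyond what the paper records.
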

\begin{proof}
We need only show that there is a rational polytope $Q(x_1,\ldots, x_n)$ of the form given in Theorem \ref{polytopelifting} for which $w(\mathbf{x}) = | Q(x_{1},\ldots,x_{n}) \cap \mathbb{Z}^{m_{1}+\cdots+m_{n}}|$ and then apply Theorem \ref{polytopelifting}.
Let $Q(x_{1},\ldots,x_{n}) = \prod_{i=1}^{n}(x_{i}-c_{i})Q_{i}$. Specifically, $Q(x_1,\ldots, x_n)$ has the form
\begin{equation*}\left\{ \begin{pmatrix} \mathbf{y_{1}}\\ \vdots \\ \mathbf{y_{n}} \end{pmatrix}\left| \begin{pmatrix} \mathbf{C_{1}} &  \cdots & \mathbf{0} \\   \vdots &  \ddots & \vdots \\ \mathbf{0}  & \cdots & \mathbf{C_{n}} \end{pmatrix}
\begin{pmatrix} \mathbf{y_{1}}\\ \vdots \\ \mathbf{y_{n}} \end{pmatrix} =
x_{1}\begin{pmatrix} \mathbf{d_{1}}\\\mathbf{0} \\ \vdots \\ \mathbf{0} \end{pmatrix}+ \cdots + x_{n}\begin{pmatrix} \mathbf{0}\\ \vdots \\ \mathbf{0}  \\ \mathbf{d_{n}} \end{pmatrix} + \mathbf{e}, \mathbf{y} \geq \mathbf{0}\right\},\right.
\end{equation*}
where the $i$-th entry of $\mathbf{e}$ is equal to $-c_i\mathbf{d_{i}}$.
\end{proof}

This corollary allows us to obtain monomials as weight functions.

\begin{cor}\label{monomial}
For every monomial $w(\mathbf{x}) = \mathbf{x}^{\alpha} = x_{1}^{\alpha_{1}}x_{2}^{\alpha_{2}}\cdots x_{n}^{\alpha_{n}}$, there exists a weight lifting polytope $P^{*}\subseteq \mathbb{R}^{n^*}$ where $n^* = n + 2|\alpha| = n + 2\sum_{i=1}^n \alpha_i$ such that
$$  \sum_{\mathbf{x} \in P \cap \Z^{n}}w( \mathbf{x} ) =  \left| P^{*} \cap \Z^{n^{*}} \right|.$$
\end{cor}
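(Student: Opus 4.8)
The plan is to reduce Corollary \ref{monomial} to Corollary \ref{lifting} (and the underlying construction of Theorem \ref{polytopelifting}) by writing each single-variable power $x_i^{\alpha_i}$ as a product of \emph{linear} late-dilated Ehrhart polynomials. The key building block is the identity $x = \binom{x}{1} = |(x-1)\Delta_1 \cap \Z^2|$, where $\Delta_1 = \{(y_1,y_2)\mid y_1+y_2=1,\ y_1,y_2\geq 0\}$ is the $1$-simplex on two coordinates. Indeed $(x-1)\Delta_1 = \{(y_1,y_2)\mid y_1+y_2 = x-1,\ \mathbf{y}\geq \mathbf{0}\}$ has exactly $x$ nonnegative integer points when $x\geq 1$ and none when $x=0$, so this late-dilated (shift $c=1$) Ehrhart polynomial in two variables equals $x$ on all of $\Z_{\geq 0}$.

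First I would apply this identity $\alpha_i$ times to write $x_i^{\alpha_i} = \prod_{j=1}^{\alpha_i} |(x_i-1)\Delta_1 \cap \Z^2|$, so that the monomial becomes $w(\mathbf{x}) = \mathbf{x}^{\alpha} = \prod_{i=1}^n \prod_{j=1}^{\alpha_i} |(x_i-1)\Delta_1 \cap \Z^2|$, a product of $|\alpha| = \sum_i \alpha_i$ linear late-dilated Ehrhart polynomials, each contributing two coordinates. The only wrinkle relative to Corollary \ref{lifting} is that several factors now depend on the same parameter $x_i$, whereas Corollary \ref{lifting} was stated with exactly one factor per variable.

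Next I would run the block-diagonal construction of the proof of Corollary \ref{lifting}, but allowing repeated parameters. Concretely, I build the parametric polytope $Q(x_1,\ldots,x_n)$ whose defining matrix $\mathbf{C}$ is block-diagonal with one $1\times 2$ block $(1\ \ 1)$ for each of the $|\alpha|$ factors, and whose right-hand side forces the block attached to a factor carrying parameter $x_i$ to equal $x_i - 1$. In the notation of Theorem \ref{polytopelifting} this means taking $\mathbf{d_i}$ to be the indicator of the blocks attached to $x_i$ and $\mathbf{e}$ to have entry $-1$ in every block. Since the lattice points of $Q(x_1,\ldots,x_n)$ factor over the blocks, this yields $|Q(x_1,\ldots,x_n)\cap \Z^{2|\alpha|}| = \prod_i x_i^{\alpha_i} = \mathbf{x}^{\alpha} = w(\mathbf{x})$, with $Q$ using $2|\alpha|$ coordinates. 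Applying part (1) of Theorem \ref{polytopelifting} to this $Q$ and the given $P$ then produces a weight lifting polytope $P^*\subseteq \R^{n^*}$ with $n^* = n + 2|\alpha|$ and $\sum_{\mathbf{x}\in P\cap \Z^n} w(\mathbf{x}) = |P^*\cap \Z^{n^*}|$, as claimed.

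The main obstacle is essentially the observation that the product construction of Corollary \ref{lifting} survives unchanged when two or more factors share a parameter $x_i$; once one sees that repeated parameters cause no difficulty in the block-diagonal assembly, the remainder is bookkeeping of the variable count. The one genuine point needing care is verifying the linear identity $x = |(x-1)\Delta_1 \cap \Z^2|$ at the boundary value $x=0$, together with the convention $x_i^0 = 1$ when $\alpha_i = 0$ (corresponding to an empty product and no extra blocks), so that the equality $|Q\cap\Z^{2|\alpha|}| = \mathbf{x}^{\alpha}$ holds on \emph{all} of $P\cap \Z^n$ and not merely in the relative interior.
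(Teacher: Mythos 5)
Your proposal is correct and is essentially the paper's own argument: the paper realizes $x_i^{\alpha_i}$ as the late-dilated Ehrhart polynomial (with shift $c_i=1$) of the $\alpha_i$-dimensional hypercube written in equality form with $2\alpha_i$ coordinates and constraints $y_j+z_j=t$, which is exactly the product of $\alpha_i$ copies of your $\Delta_1$, so the resulting parametric polytope $Q(x_1,\ldots,x_n)$ and hence $P^{*}$ coincide with yours up to relabeling of coordinates. The only organizational difference is that by bundling the $\alpha_i$ simplices into a single polytope $Q_i$ the paper can invoke Corollary~\ref{lifting} verbatim (one factor per variable), whereas you re-run its block-diagonal construction to accommodate repeated parameters; both are valid and give $n^{*}=n+2|\alpha|$.
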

\begin{proof}
By Corollary \ref{lifting}, we just need to show that $x_{i}^{\alpha_{i}}$ is a {late-dilated} Ehrhart polynomial. It is well known that $(t+1)^{\alpha_{i}}$ is the Ehrhart polynomial of the $\alpha_{i}$-dimensional hypercube. In particular, the $t$-th dilation of the hypercube has the form
\begin{equation*}
t Q_{i} = \left\{ \begin{pmatrix} y_{1} \\ \vdots \\ y_{\alpha_{i}} \\ z_{1} \\ \vdots \\ z_{\alpha_{i}} \end{pmatrix} \left| \begin{bmatrix} \begin{matrix} 1 & 0 & \cdots & 0 \\ 0 & 1 & \cdots & 0 \\  \vdots & \vdots & \ddots & \vdots \\ 0 & 0 & \cdots & 1  \end{matrix} & \begin{matrix} 1 & 0 & \cdots & 0 \\ 0 & 1 & \cdots & 0 \\  \vdots & \vdots & \ddots & \vdots \\ 0 & 0 & \cdots & 1  \end{matrix} \end{bmatrix} \begin{pmatrix} y_{1} \\ \vdots \\ y_{\alpha_{i}} \\ z_{1} \\ \vdots \\ z_{\alpha_{i}} \end{pmatrix} = t\begin{pmatrix} 1 \\ \vdots \\ 1  \end{pmatrix} , y_{i}\geq 0,z_{i}\geq 0 \right. \right\}.
\qedhere
\end{equation*}
\end{proof}

In the next corollary we consider weight functions that are polynomials.

\begin{cor}\label{polynomial}
For every polynomial $w( \mathbf{x} ) = \sum_{\alpha \in I}c_{\alpha} \mathbf{x} ^{\alpha} = \sum_{\alpha \in I}c_{\alpha}x_{1}^{\alpha_{1}}x_{2}^{\alpha_{2}}\cdots x_{n}^{\alpha_{n}}$, there exist $|I|$ weight lifting polytopes $P^{*}_{\alpha}$ indexed by the exponents of monomials such that
$$  \sum_{ \mathbf{x} \in P \cap \Z^{n}}w(x) =  \sum_{\alpha \in I}c_{\alpha}\left| P^{*}_{\alpha} \cap \Z^{n^{*}} \right|.$$
\end{cor}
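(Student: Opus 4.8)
The plan is to reduce to the single-monomial case already established in Corollary \ref{monomial} and then invoke the linearity of the summation operator. First I would expand $w$ along its monomial decomposition inside the sum and interchange the two finite summations, writing
\[
\sum_{\mathbf{x} \in P \cap \Z^{n}} w(\mathbf{x}) = \sum_{\mathbf{x} \in P \cap \Z^{n}} \sum_{\alpha \in I} c_{\alpha} \mathbf{x}^{\alpha} = \sum_{\alpha \in I} c_{\alpha} \sum_{\mathbf{x} \in P \cap \Z^{n}} \mathbf{x}^{\alpha}.
\]
This interchange is legitimate because $I$ is finite and, since $P$ is a polytope, the set $P \cap \Z^{n}$ is finite as well, so no convergence issues arise.

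Next I would apply Corollary \ref{monomial} to each exponent $\alpha \in I$ separately. For each such $\alpha$ the corollary produces a weight lifting polytope $P^{*}_{\alpha}$ realizing the inner monomial sum as an honest (unweighted) lattice point count,
\[
\sum_{\mathbf{x} \in P \cap \Z^{n}} \mathbf{x}^{\alpha} = \left| P^{*}_{\alpha} \cap \Z^{n^{*}} \right|,
\]
where the ambient dimension is $n^{*} = n + 2|\alpha|$. Substituting these $|I|$ identities into the reindexed sum above immediately gives
\[
\sum_{\mathbf{x} \in P \cap \Z^{n}} w(\mathbf{x}) = \sum_{\alpha \in I} c_{\alpha} \left| P^{*}_{\alpha} \cap \Z^{n^{*}} \right|,
\]
which is the claimed formula, with one polytope $P^{*}_{\alpha}$ for each of the $|I|$ exponents appearing in $w$.

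There is no genuine obstacle: the entire substance of the argument is carried by Corollary \ref{monomial}, and the present statement is just its linear combination. The only point deserving a small comment is notational. The dimension $n^{*}$ is \emph{not} uniform across the terms, since $P^{*}_{\alpha} \subseteq \R^{n + 2|\alpha|}$ grows with the total degree $|\alpha|$ of the monomial; thus $n^{*}$ in the displayed identity should really be read as $n^{*}_{\alpha}$, the dimension attached to $P^{*}_{\alpha}$. It is also worth emphasizing that the coefficients $c_{\alpha}$ may be arbitrary (possibly negative, zero, or non-integral) scalars: linearity lets us pull each $c_{\alpha}$ outside its lattice point count, so the output is a genuine \emph{signed} linear combination of unweighted Ehrhart counts rather than a single such count. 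This is precisely the phenomenon flagged in the remark following Theorem \ref{polytopelifting} about weights taking negative values.
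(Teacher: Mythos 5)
Your proof is correct and follows exactly the route the paper takes: the paper's proof of this corollary is the single sentence ``This follows directly from Corollary \ref{monomial},'' and your expansion--interchange--substitute argument is precisely the content of that sentence. Your remarks that $n^{*}$ should really be read as $n^{*}_{\alpha}$ (since the ambient dimension varies with $|\alpha|$) and that the $c_{\alpha}$ may be negative are both accurate and consistent with the paper's own remarks.
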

\begin{proof}
This follows directly from Corollary \ref{monomial}
\end{proof}

\begin{rem}
Corollary \ref{polynomial} implies that if $w(x)$ is a polynomial with $|I|$ nonzero monomials, then we can compute the sum of lattice points of $P$ weighted by $w$ by counting integral points in $|I|$ weight lifting polytopes.
\end{rem} 

We give another two corollaries of Theorem \ref{polytopelifting}. 
\begin{cor}\label{choose}
Consider the polynomial $w( \mathbf{x} ) = \prod_{i=1}^{n} \binom{x_{i}+\alpha_{i}-1}{\alpha_{i} -1 }$. There exists a weight lifting polytope $P^{*} \subseteq \mathbb{R}^{n^*}$ where $n^* = n+|\alpha|$ such that 
$$  \sum_{ \mathbf{x} \in P \cap \Z^{n}}w( \mathbf{x} ) =  \left| P^{*} \cap \Z^{n^{*}} \right|.$$
\end{cor}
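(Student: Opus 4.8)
The plan is to realize each factor $\binom{x_i+\alpha_i-1}{\alpha_i-1}$ as a late-dilated Ehrhart polynomial of a standard simplex and then invoke Corollary \ref{lifting} directly. The crucial observation is the classical stars-and-bars identity: the number of nonnegative integer solutions of $y_1+\cdots+y_{\alpha_i}=t$ equals $\binom{t+\alpha_i-1}{\alpha_i-1}$. Geometrically these solutions are exactly the lattice points of the $t$-th dilate of the standard $(\alpha_i-1)$-dimensional simplex $\Delta_{\alpha_i-1}\subseteq\R^{\alpha_i}$, so that $\binom{t+\alpha_i-1}{\alpha_i-1}=|t\Delta_{\alpha_i-1}\cap\Z^{\alpha_i}|$.

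First I would set $Q_i=\Delta_{\alpha_i-1}$, with defining data $\mathbf{C_i}=(1,1,\ldots,1)\in\Z^{1\times\alpha_i}$ and $\mathbf{d_i}=1$, so that $m_i=\alpha_i$ and $r_i=1$. By the identity above, taking $c_i=0$, the univariate function $w_i(t)=\binom{t+\alpha_i-1}{\alpha_i-1}=|(t-0)Q_i\cap\Z^{\alpha_i}|$ is a late-dilated Ehrhart polynomial in the precise sense of the definition preceding Corollary \ref{lifting}.

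Next I would apply Corollary \ref{lifting} to the weight $w(\mathbf{x})=\prod_{i=1}^n w_i(x_i)$ with these choices of $Q_i$ and $c_i$. The corollary produces a weight lifting polytope $P^*$ whose ambient dimension is $n^*=n+m_1+\cdots+m_n=n+\sum_{i=1}^n\alpha_i=n+|\alpha|$, and for which $\sum_{\mathbf{x}\in P\cap\Z^n}w(\mathbf{x})=|P^*\cap\Z^{n^*}|$, which is exactly the claim.

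The step requiring the most care is not a genuine obstacle but a bookkeeping check: one must verify that the simplex $\Delta_{\alpha_i-1}$ lives in $\R^{\alpha_i}$, so that $m_i=\alpha_i$ rather than $\alpha_i-1$, in order for the dimension count to yield $n+|\alpha|$; and one must confirm that the shift $c_i=0$ is the correct one here, in contrast to the earlier example $\binom{t}{m-1}$, which required $c=m-1$. Once these identifications are pinned down, the result is immediate from Corollary \ref{lifting}.
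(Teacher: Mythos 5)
Your proposal is correct and follows essentially the same route as the paper: identify $\binom{x_i+\alpha_i-1}{\alpha_i-1}$ as the Ehrhart polynomial of the standard $(\alpha_i-1)$-simplex sitting in $\R^{\alpha_i}$ (so $m_i=\alpha_i$, giving $n^*=n+|\alpha|$) and then invoke Corollary \ref{lifting}. Your explicit bookkeeping of $c_i=0$ and the ambient dimension is a welcome addition but not a departure from the paper's argument.
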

\begin{proof}
Recall that $\binom{x_{i}+\alpha_{i}-1}{\alpha_{i} -1 }$ is the Ehrhart polynomial of the standard $(\alpha_{i}-1)$-simplex $1 = y_{1} + \cdots + y_{\alpha_{i}}$ with $y_{i} \geq 0$.
In particular, the $t$-th dilation of the simplex has the form
$$t Q_{i} = \left\{ \begin{pmatrix} y_{1} \\ \vdots \\ y_{\alpha_{i}}  \end{pmatrix} \left| \begin{bmatrix}  1  & \cdots & 1  \end{bmatrix} \begin{pmatrix} y_{1} \\ \vdots \\ y_{\alpha_{i}} \end{pmatrix} = t\cdot 1 , y_{i}\geq 0 \right. \right\}.$$
Applying Corollary \ref{lifting} gives the weight lifting polytope from the statement.
\end{proof}

In Corollary \ref{monomial} we express the weighted sum by a monomial of the lattice points of $P$ using a single $P^* \subset \mathbb{R}^{n + 2|\alpha|}$.
In Corollary \ref{monomial2} below, we present a complementary point of view. Namely, we express this sum using at most $(\alpha_{1}+1)(\alpha_{2}+1)\cdots (\alpha_{n}+1)$ polytopes of lower dimension $P^*_{\beta} \subset \mathbb{R}^{n + |\beta|}$.

\begin{cor}\label{multichoose}
Consider the polynomial $w( \mathbf{x} ) = \prod_{i=1}^{n} \binom{x_{i}}{\alpha_{i} -1 }$. There exists a weight lifting polytope $P^{*}$ of the dimension $n^{*} = n+|\alpha|$ such that
  $\sum_{ \mathbf{x} \in P \cap \Z^{n}}w( \mathbf{x} ) =  \left| P^{*} \cap \Z^{n^{*}} \right|.$
\end{cor}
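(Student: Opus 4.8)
The plan is to reduce the statement to Corollary~\ref{lifting} by exhibiting each factor $\binom{x_i}{\alpha_i-1}$ as a \emph{late-dilated} Ehrhart polynomial. Recall from the proof of Corollary~\ref{choose} that the ordinary Ehrhart polynomial of the standard $(\alpha_i-1)$-simplex $\Delta_{\alpha_i-1}=\{\mathbf{y}\mid y_1+\cdots+y_{\alpha_i}=1,\ y_j\geq 0\}\subseteq\mathbb{R}^{\alpha_i}$ is $|t\,\Delta_{\alpha_i-1}\cap\mathbb{Z}^{\alpha_i}|=\binom{t+\alpha_i-1}{\alpha_i-1}$. First I would set $Q_i=\Delta_{\alpha_i-1}$ and $c_i=\alpha_i-1$, so that $m_i=\alpha_i$, and claim that $w_i(x_i)=\binom{x_i}{\alpha_i-1}$ satisfies $w_i(x_i)=|(x_i-c_i)\,Q_i\cap\mathbb{Z}^{\alpha_i}|$, i.e.\ each factor is late-dilated with dilation offset $c_i$.

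To justify this identity I would substitute $t\mapsto x_i-(\alpha_i-1)$ into the Ehrhart polynomial above, which formally yields $\binom{(x_i-(\alpha_i-1))+\alpha_i-1}{\alpha_i-1}=\binom{x_i}{\alpha_i-1}$. The only subtlety---and the step that requires genuine care---is that $|(x_i-c_i)\,Q_i\cap\mathbb{Z}^{\alpha_i}|$ is \emph{literally} a lattice-point count, so I must confirm it agrees with the polynomial $\binom{x_i}{\alpha_i-1}$ for every admissible value $x_i\geq 0$, not merely where the Ehrhart formula is manifestly valid. When $x_i\geq\alpha_i-1$ the dilation factor $x_i-c_i$ is nonnegative and the Ehrhart count applies directly. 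When $0\leq x_i\leq\alpha_i-2$ the factor $x_i-c_i$ is negative, so by Definition~\ref{def:Q} the polytope $(x_i-c_i)Q_i=\{\mathbf{y}\mid y_1+\cdots+y_{\alpha_i}=x_i-c_i,\ y_j\geq 0\}$ is empty and contributes $0$; this matches $\binom{x_i}{\alpha_i-1}=0$ in the same range. Hence the identity holds on all of $P\cap\mathbb{Z}^n$, where every coordinate is nonnegative.

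With each $w_i$ written as a late-dilated Ehrhart polynomial, the product $w(\mathbf{x})=\prod_{i=1}^n\binom{x_i}{\alpha_i-1}$ is exactly of the form treated by Corollary~\ref{lifting}. Applying that corollary produces a single weight lifting polytope $P^*\subseteq\mathbb{R}^{n^*}$ with $n^*=n+m_1+\cdots+m_n=n+\alpha_1+\cdots+\alpha_n=n+|\alpha|$ and $\sum_{\mathbf{x}\in P\cap\mathbb{Z}^n}w(\mathbf{x})=|P^*\cap\mathbb{Z}^{n^*}|$, as claimed. I expect the only real obstacle to be the verification in the negative-dilation regime described above; everything else is a direct specialization of the machinery already established for products of late-dilated Ehrhart quasi-polynomials.
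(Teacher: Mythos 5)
Your proof is correct and follows the same route as the paper: identify $\binom{x_i}{\alpha_i-1}$ as the late-dilated Ehrhart polynomial $|(x_i-(\alpha_i-1))\Delta_{\alpha_i-1}\cap\mathbb{Z}^{\alpha_i}|$ and invoke Corollary~\ref{lifting}. Your explicit check that the count vanishes (matching $\binom{x_i}{\alpha_i-1}=0$) when the dilation factor $x_i-(\alpha_i-1)$ is negative is a detail the paper leaves implicit, but it is the same argument.
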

\begin{proof}
The function $\binom{x_{i}}{\alpha_{i} -1 }$ is a {late-dilated} Ehrhart polynomial because $\binom{x_{i} + \alpha_{i} -1 }{ \alpha_{i} -1 }$ is the Ehrhart polynomial of the standard $(\alpha_i-1)$-simplex. Applying Corollary \ref{lifting} gives the desired weight lifting polytope.
\end{proof}

Note that $\{ \binom{x+k-1}{k -1 } \mid k=1,2,\dots \}$ and $\{\binom{x}{k -1 } \mid k=1,2,\dots \}$ are two well-known bases of the vector space of polynomials in $x$.

\begin{cor}\label{monomial2}
For every monomial $w( \mathbf{x} ) =  \mathbf{x}^{\alpha} = x_{1}^{\alpha_{1}}\cdots x_{n}^{\alpha_{n}}$, there exist at most $(\alpha_{1}+1)\cdots (\alpha_{n}+1)$ weight lifting polytopes $P^{*}_{\beta}$ indexed by the vector $\beta$ and $P^{*}_{\beta} \subset \mathbb{R}^{n^{*}}$ where $n^{*} = n + |\beta|$
such that
$$  \sum_{ \mathbf{x} \in P \cap \Z^{n}}w( \mathbf{x} ) =  \sum_{\beta \leq \alpha}c_{\beta} \left| P^{*}_{\beta} \cap \Z^{n^{*}} \right|.$$
\end{cor}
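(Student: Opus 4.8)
The plan is to reduce the monomial weight to the product-of-binomials weights already handled by Corollary \ref{multichoose}, by passing through the classical change of basis between the monomial basis and the binomial-coefficient basis $\{\binom{x}{k}\}_{k\geq 0}$ of univariate polynomials (the basis recorded in the remark after Corollary \ref{multichoose}).

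First I would expand each one-variable power in the binomial basis. By the theory of finite differences, $x^{\alpha_i}=\sum_{k=0}^{\alpha_i}k!\,S(\alpha_i,k)\binom{x}{k}$, where $S(\alpha_i,k)$ is the Stirling number of the second kind; in particular the coefficients $c_{i,k}:=k!\,S(\alpha_i,k)$ are non-negative integers and vanish for $k>\alpha_i$. Writing $x_i^{\alpha_i}=\sum_{k=0}^{\alpha_i}c_{i,k}\binom{x_i}{k}$ and multiplying over $i$, I would obtain
$$w(\x)=\prod_{i=1}^{n}x_i^{\alpha_i}=\sum_{\beta\leq\alpha}c_\beta\prod_{i=1}^{n}\binom{x_i}{\beta_i},\qquad c_\beta:=\prod_{i=1}^{n}c_{i,\beta_i},$$
where $\beta=(\beta_1,\dots,\beta_n)$ ranges over all integer vectors with $0\leq\beta_i\leq\alpha_i$. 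This already explains the count $\prod_i(\alpha_i+1)$ of terms, and hence of polytopes; moreover many $c_\beta$ vanish, since $S(\alpha_i,0)=0$ once $\alpha_i\geq 1$, which is why the bound is stated with ``at most.''

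Next, for each fixed $\beta\leq\alpha$ I would apply Corollary \ref{multichoose} (equivalently Corollary \ref{lifting}) to the product weight $w_\beta(\x)=\prod_i\binom{x_i}{\beta_i}$: each factor $\binom{x_i}{\beta_i}$ is a late-dilated Ehrhart polynomial of a standard $\beta_i$-dimensional simplex, so there is a single weight lifting polytope $P^*_\beta\subset\R^{n^*}$ with $\sum_{\x\in P\cap\Z^n}w_\beta(\x)=|P^*_\beta\cap\Z^{n^*}|$. Summing over $\beta$ with weights $c_\beta$ and using linearity of the finite sum $\sum_{\x\in P\cap\Z^n}$ then gives $\sum_{\x\in P\cap\Z^n}w(\x)=\sum_{\beta\leq\alpha}c_\beta\,|P^*_\beta\cap\Z^{n^*}|$, as claimed.

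The routine part is the Stirling-number identity; the step that needs care is the dimension bookkeeping $n^*=n+|\beta|$. To match this exact count one must realize each $\binom{x_i}{\beta_i}$ as the late-dilated Ehrhart polynomial of the coordinate-minimal solid simplex $\{\mathbf{y}\in\R^{\beta_i}_{\geq 0}\mid y_1+\cdots+y_{\beta_i}\leq 1\}$, which contributes exactly $\beta_i$ new coordinates (a degree-$\beta_i$ factor forces a $\beta_i$-dimensional lifting polytope, so $\beta_i$ is optimal), rather than the $(\beta_i+1)$-coordinate equality presentation one reads off verbatim from the proof of Corollary \ref{multichoose}; factors with $\beta_i=0$ contribute nothing. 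I would make this minimal representation explicit so that Corollary \ref{lifting} delivers $P^*_\beta$ of the stated dimension $n+|\beta|$, and I would remark that, since every $c_\beta$ is a non-negative integer, the right-hand side is a non-negative-integer combination of honest lattice-point counts.
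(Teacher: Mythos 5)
Your proof takes essentially the same route as the paper: expand the monomial in a binomial-coefficient basis and apply Corollaries \ref{choose} and \ref{multichoose} termwise, using linearity of the finite sum over $P\cap\Z^n$. Your added precision about the Stirling-number coefficients and the $n^*=n+|\beta|$ bookkeeping only makes the argument more explicit --- the paper instead shifts the index so that $v_{\beta_i}$ has degree $\beta_i-1$ and is realized by an equality-form simplex with exactly $\beta_i$ coordinates, which achieves the stated dimension without your inequality-form ``solid simplex'' workaround.
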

\begin{proof}
Let $v_{k}(x)$ be one of the two binomial bases described above.
We can transform the monomial basis $\{x^k\mid k = 0,1,2,\ldots\}$
into the binomial basis, 
\[
x_{1}^{\alpha_{1}}x_{2}^{\alpha_{2}}\cdots x_{n}^{\alpha_{n}} = \sum_{\beta \leq \alpha}c(\alpha,\beta) \cdot v_{\beta_{1}}(x_{1})v_{\beta_{2}}(x_{2})\cdots v_{\beta_{n}}(x_{n}).
\]
By Corollary \ref{choose} and Corollary \ref{multichoose}, for each $\beta$ and each polynomial $v_{\beta_{1}}(x_{1})v_{\beta_{2}}(x_{2})\cdots v_{\beta_{n}}(x_{n})$, there exists a corresponding weight lifting polytope $P^{*}_{\beta} \subset \mathbb{R}^{n + |\beta|}$.
\end{proof}

We now give the proof of \Cref{integrate-maximize}. 
Recall from its statement that we are dealing with the most general \emph{Ehrhart quasi-polynomial weighted} case, i.e., the weight $w(x)$ is a non-constant quasi-polynomial counting lattice points of a parametric polyhedron, as in the statement of \Cref{polytopelifting}.

\begin{proof}[Proof of Corollary \ref{integrate-maximize}]
Applying Theorem \ref{polytopelifting} to $P$ gives a weight lifting polytope $P^*$ for which $\weightsum{P}{w}{t} = \weightsum{P^*}{1}{t}$.  Applying a classical result relating the volume and lead coefficient of the Ehrhart quasi-polynomial of $P^*$ completes the proof.  Both $\weightsum{P}{w}{t}$ and $\weightsum{P^*}{1}{t}$ are quasi-polynomial functions of $t$, and concretely, this equality implies that their leading coefficients are the same.

We can then replace integration of $w(x)$ over $P$ with computation of the leading coefficient of $\weightsum{P^*}{1}{t}$, which is equivalent to computing the volume of $P^*$. 
Note that this transformation can be carried out in a number of steps that is polynomial in the size of the inputs describing $P^*$.

For the second claim, we start by recalling an elementary fact. Let $S = \{s_1,\ldots, s_r\}$ be a set of non-negative real numbers.  Then $\max\{s_i \mid s_i \in S\} = \lim_{k \rightarrow \infty}\sqrt[k]{\sum_{j=1}^r s_j^k}$.
The arithmetic mean of $S$ is at most its maximum value, which in turn is at most as big as $\sum_i s_i$. We apply these ideas to the set $S = \{w(\alpha) \mid \alpha \in P \cap \Z^n\}$. This gives upper and lower bounds for each positive integer $k$:
\[
L_{k}=
\sqrt[k]{\frac{\sum\limits_{\alpha\in P \cap \Z^n}
w(\alpha)^{k}}{|P\cap\Z^d|}} \leq
\max\{w(\alpha)\mid \alpha\in P\cap\Z^n\}\leq
\sqrt[k]{\sum\limits_{\alpha\in P \cap \Z^n}
w(\alpha)^{k}}=U_{k}.
\]

As $k\rightarrow\infty$, $L_{k}$ and $U_{k}$ approach
this maximum value monotonically (from below and above, respectively).
Trivially, if the difference between the (rounded) upper and lower bounds
becomes strictly less than $1$, we have determined $\max\{w(x)\mid x\in P\cap\Z^n\}=\lceil L_{k}\rceil$. Thus the process 
terminates with the correct value. Finally, the key value in the sequences $L_{k}$ and $U_{k}$  is the term $\weightsum{P}{w^k}{t}=\sum\limits_{\alpha\in tP \cap \Z^n} w(\alpha)^{k}$. Corollary \ref{lifting} describes how to construct the weight lifting polytope $P^*$ corresponding to the pair $P$ and $w(\alpha)^k$.
\end{proof}

\section{Applications} \label{applications}

Theorem \ref{polytopelifting} has applications beyond integration and maximization of Ehrhart quasi-polynomials.  In this section we discuss how to use it to find new algebraic combinatorial identities by carefully choosing the polytope $P$ and reinterpreting the weight function $w$ in terms of Ehrhart quasi-polynomials of some polytopes $Q_i$.

\subsection{Weighted Ehrhart in number theory}\label{WENT}

\subsubsection{Simultaneous core partitions.}
We first describe an area in which weighted Ehrhart machinery has already been applied to prove a significant result.  Let $\lambda$ be a partition and $\mathcal{H}(\lambda)$ denote its multiset of hook lengths.  The partition $\lambda$ is called an \emph{$a$-core partition} if no element of $\mathcal{H}(\lambda)$ is divisible by $a$.  If $\lambda$ is both an $a$-core partition and a $b$-core partition, then we say that it is an $(a,b)$-core partition.  There is an extensive literature about statistical properties of sizes of simultaneous core partitions \cite{CoreSurveyNam,CoreSurveyNath}.  Anderson gave a formula for the number of simultaneous $(a,b)$-core paritions in the case where $a$ and $b$ are relatively prime.
\begin{theorem}\cite[Theorem 1]{Anderson}\label{AndersonThm}
Suppose $a$ and $b$ are relatively prime.  The number of simultaneous $(a,b)$-core partitions is 
\[
\frac{1}{a+b}\binom{a+b}{a}.
\]
\end{theorem}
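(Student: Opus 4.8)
The plan is to prove Anderson's formula bijectively: I would reduce the enumeration of $(a,b)$-core partitions to a count of lattice paths in a rectangle, and then extract the factor $\frac{1}{a+b}$ from a cyclic-symmetry (cycle lemma) argument. First I would encode each partition $\lambda$ by its \emph{beta-set}, equivalently the Maya diagram or abacus reading of the boundary path of its Young diagram: a subset $S \subseteq \Z$ containing every sufficiently negative integer and omitting every sufficiently large one. In this language the defining condition ``no hook length of $\lambda$ is divisible by $a$'' becomes the purely combinatorial closure condition that $S$ is stable under subtracting $a$ (on the $a$-runner abacus, the beads are pushed as high as possible and none can move up). The key reduction is therefore that $\lambda$ is an $(a,b)$-core if and only if its bead set $S$ is simultaneously closed under subtracting $a$ and subtracting $b$.

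Second, I would exploit coprimality. Because $\gcd(a,b)=1$, a bead set closed under $-a$ and $-b$ is controlled by the numerical semigroup $\langle a,b\rangle$, and such an $S$ is determined by a finite order ideal in the poset of \emph{gaps} of $\langle a,b\rangle$ (the $\tfrac{(a-1)(b-1)}{2}$ positive integers not of the form $ma+nb$), partially ordered by $g \preceq g'$ iff $g'-g \in \langle a,b\rangle$. I would then show this gap poset is isomorphic to the poset of cells in a staircase region inside an $a\times b$ rectangle, so that its order ideals correspond bijectively to monotone $N/E$ lattice paths from $(0,0)$ to $(b,a)$ that stay weakly below the diagonal $ax=by$. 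This identifies the number of $(a,b)$-cores with the number of such sub-diagonal paths. (One can sanity-check the intermediate claim directly: for $(a,b)=(3,4)$ the gaps are $\{1,2,5\}$ with $1\preceq 5$, $2\preceq 5$, giving exactly five order ideals.)

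Finally, I would count these paths. The total number of monotone lattice paths in the $a\times b$ rectangle is $\binom{a+b}{a}$, and I would invoke the cycle lemma: since $\gcd(a,b)=1$, the diagonal segment from $(0,0)$ to $(b,a)$ passes through no interior lattice point, so the cyclic group $\Z/(a+b)\Z$ acts freely on the $a+b$ rotations of each step sequence, and exactly one rotation in every orbit yields a path staying strictly below the diagonal. Hence the sub-diagonal paths number $\frac{1}{a+b}\binom{a+b}{a}$, the rational Catalan number, which is the claimed count.

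The main obstacle I anticipate is not the final enumeration but making the middle step rigorous: verifying that the bead-set closure conditions translate \emph{exactly} into the gap-poset order-ideal (equivalently sub-diagonal path) condition, with all offset and charge conventions pinned down so that the correspondence is a genuine bijection, and in particular that distinct cores give distinct paths and that every sub-diagonal path arises. The hypothesis $\gcd(a,b)=1$ is essential precisely here and again in the cycle lemma, since it guarantees the diagonal avoids interior lattice points and makes the cyclic action free; I would expect coprimality to enter twice, and carefully tracking where it is used is the most delicate part of the argument.
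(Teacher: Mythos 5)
This statement is quoted from Anderson's paper (\cite[Theorem 1]{Anderson}); the present paper gives no proof of it, so there is nothing internal to compare against. Your sketch is, in outline, exactly Anderson's original argument: the abacus/beta-set characterization of $a$-cores, the reduction (using $\gcd(a,b)=1$) to order ideals in the poset of gaps of the numerical semigroup $\langle a,b\rangle$, the identification of those order ideals with monotone lattice paths in the $a\times b$ rectangle weakly below the diagonal, and the cycle-lemma count $\frac{1}{a+b}\binom{a+b}{a}$. All of these steps are correct (your $(3,4)$ sanity check with gaps $\{1,2,5\}$ and five order ideals is right, and coprimality is indeed used exactly where you say it is, both to make the gap poset finite with $(a-1)(b-1)/2$ elements fitting under the diagonal and to make the cyclic action free), so as a proof plan this is sound and standard; the remaining work is only the bookkeeping you already flagged in the bead-set-to-path bijection.
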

\noindent Johnson proved a conjecture of Armstrong, computing the average size of this finite set of partitions.
\begin{theorem}\cite[Theorem 7]{Johnson}\label{JohnsonThm}
Suppose $a$ and $b$ are relatively prime.  The average size of an $(a,b)$-core partition is $(a+b+1)(a-1)(b-1)/24$.
\end{theorem}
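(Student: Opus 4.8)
The plan is to reduce everything to two lattice-point quantities and handle the hard one with the weight lifting polytope. Write $\mathcal{C}_{a,b}$ for the set of $(a,b)$-core partitions, which is finite precisely because $\gcd(a,b)=1$. The average in question is $\frac{1}{N}\sum_{\lambda\in\mathcal{C}_{a,b}}|\lambda|$, where by \Cref{AndersonThm} the denominator is $N=\frac{1}{a+b}\binom{a+b}{a}$. Thus the whole problem is to evaluate the numerator $\Sigma=\sum_{\lambda\in\mathcal{C}_{a,b}}|\lambda|$ and to verify $\Sigma/N=(a+b+1)(a-1)(b-1)/24$. I will realize $\Sigma$ as a weighted sum of lattice points and then linearize it.

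First I would install the abacus (affine symmetric group) model of cores. Each $a$-core $\lambda$ is encoded by an integer vector $\mathbf{c}=(c_0,\ldots,c_{a-1})$ lying on the hyperplane $\sum_i c_i=0$, which I identify with $\R^{a-1}$, and the size is an explicit inhomogeneous quadratic $|\lambda|=\frac{a}{2}\sum_i c_i^2+\ell(\mathbf{c})$ for a fixed linear form $\ell$. The extra condition that $\lambda$ also be a $b$-core cuts out, under the coprimality hypothesis, exactly the lattice points of a $b$-dilated fundamental alcove: there is a fixed rational simplex $\Delta\subset\R^{a-1}$ depending only on $a$ with $\mathcal{C}_{a,b}\longleftrightarrow b\Delta\cap\Z^{a-1}$, and as a consistency check the Ehrhart polynomial $|b\Delta\cap\Z^{a-1}|=\frac{1}{a}\binom{a+b-1}{a-1}$ reproduces $N$. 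Writing the simplex in the form $P=\{\mathbf{x}\mid \mathbf{Ax}=\mathbf{b}, \mathbf{x}\geq 0\}$ and transporting the quadratic to this chart produces a polynomial weight $w(\mathbf{x})$ with $\Sigma=\sum_{\mathbf{x}\in P\cap\Z^{a-1}}w(\mathbf{x})=\weightsum{P}{w}{1}$.

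Next I would invoke the machinery of the paper. Since $w$ is quadratic, hence a polynomial with $O(a^2)$ monomials, \Cref{polynomial} (with \Cref{monomial} and \Cref{monomial2}) rewrites $\Sigma$ as a signed combination $\sum_\alpha c_\alpha|P^*_\alpha\cap\Z^{n^*}|$ of unweighted lattice-point counts of weight lifting polytopes. Equivalently, treating $b$ as the dilation parameter, $\weightsum{\Delta}{w}{b}$ is a polynomial in $b$ of degree $(a-1)+2=a+1$ whose leading term is $\int_\Delta w$, computable as a normalized volume of a weight lifting polytope via \Cref{integrate-maximize}. To pin down all the lower coefficients I would exploit two structural features: the symmetry $\mathcal{C}_{a,b}=\mathcal{C}_{b,a}$, which forces $\Sigma$ and the target to be symmetric in $a$ and $b$; and Ehrhart reciprocity for $\Delta$, relating its values at $b$ and $-b$. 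Because the degree in $b$ is controlled, these constraints plus a handful of explicit base evaluations determine the polynomial and yield $\Sigma=\frac{(a-1)(b-1)(a+b+1)}{24a}\binom{a+b-1}{a-1}$, whence the stated average after dividing by $N$.

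I expect the main obstacle to be the evaluation step, not the set-up: the bijection and the quadratic size formula are standard consequences of the abacus model, and the passage from a weighted sum to an unweighted count is exactly what \Cref{polytopelifting} guarantees. The delicate point is extracting the exact rational number $(a+b+1)(a-1)(b-1)/24$ from the Ehrhart data of a high-dimensional weight lifting polytope; the cleanest route avoids computing the full Ehrhart polynomial and instead uses $a\leftrightarrow b$ symmetry, reciprocity, and small cases such as $(a,b)=(2,3)$ and $(2,5)$ to fix a few unknown constants. The coprimality hypothesis is essential throughout, since for $\gcd(a,b)>1$ the set $\mathcal{C}_{a,b}$ is infinite; concretely, coprimality is what guarantees that no lattice points fall on the alcove walls of $b\Delta$, so that the count is an honest Ehrhart evaluation and the weight lifting construction applies without the periodic corrections of the genuinely quasi-polynomial regime.
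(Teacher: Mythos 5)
First, a framing point: the paper does not prove this statement --- it is quoted directly as \cite[Theorem 7]{Johnson}, and the surrounding text only sketches Johnson's argument (a bijection between $(a,b)$-cores and the points of $b\Delta_{a-1}\cap\Z^{a}$ satisfying $\sum_i i z_i\equiv 0\pmod a$, a reduction of the weighted sum to $\tfrac1a$ times the sum of a quadratic over \emph{all} of $b\Delta_{a-1}$, and then Euler--Maclaurin plus weighted Ehrhart reciprocity). Your outline follows that same strategy in broad strokes, so the comparison is against this sketch rather than against a written-out proof.

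That said, there are two genuine gaps. The first is in the set-up: you assert a clean bijection $\mathcal{C}_{a,b}\leftrightarrow b\Delta\cap\Z^{a-1}$ for a fixed rational simplex $\Delta$, but the natural polytope here is $\mathrm{SC}_a(b)$, cut out by $c_{i+b}-c_i\le\lfloor (b+i)/a\rfloor$; because of the floor functions this is not literally the $b$-th dilation of a fixed simplex, and the step you omit entirely is the one the paper explicitly attributes to Johnson, namely that the weighted sum over the congruence-restricted points of $b\Delta_{a-1}$ equals $\tfrac1a$ times the weighted sum over all of its lattice points. This is not automatic: the size function $h_a(\mathbf{c})$ is not invariant under the cyclic relabeling that permutes the congruence classes, so one must symmetrize the weight or argue orbit by orbit, using that coprimality makes the $\Z/a\Z$-action free. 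The second gap is the evaluation. For fixed $a$, $\sum_{z\in b\Delta_{a-1}\cap\Z^a}w(z)$ is a polynomial in $b$ of degree $a+1$; reciprocity cuts the unknown coefficients roughly in half, and two small cases such as $(2,3)$ and $(2,5)$ come nowhere near determining it for general $a$. The $a\leftrightarrow b$ symmetry only becomes a usable finite-dimensional constraint after you know the average is a polynomial of bounded degree in both variables simultaneously --- i.e., after you essentially know the answer. What actually closes the argument is the exact classical computation of $\sum z_i$, $\sum z_i^2$, $\sum z_iz_j$ over $b\Delta_{a-1}\cap\Z^a$ (equivalently, in the language of this paper, exact lattice-point counts for the handful of weight lifting polytopes supplied by \Cref{polynomial}), not interpolation from a few data points. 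As written, your proposal sets up the right objects but does not contain a derivation of the constant $(a+b+1)(a-1)(b-1)/24$.
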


Johnson's proof of Theorem \ref{JohnsonThm} fits into the framework of weighted Ehrhart theory.  Suppose that $a$ and $b$ are relatively prime positive integers.  It is not hard to show that $a$-core partitions are in bijection with elements of $\Lambda_a = \left\{ (c_0,\ldots, c_{a-1}) \in \Z^a \colon \sum_i c_i = 0\right\}$. Let $r_a(x)$ be the remainder when $x$ is divided by $a$.  We use cyclic indexing for elements $\mathbf{c} \in \Lambda_a$, that is, for $k \in \Z$ we set $c_k = c_{r_a(k)}$.  Simultaneous $(a,b)$-core partitions are in bijection with the elements of $\Lambda_a$ satisfying the inequalities $c_{i+b}-c_i \le  \left\lfloor \frac{b+i}{a} \right\rfloor$ for each $i \in \{0,1,\ldots, a-1\}$ \cite[Lemma 23]{Johnson}.  In this way, we see that $(a,b)$-core partitions are in bijection with integer points in a rational polytope $\mathrm{SC}_a(b)$.  The size of the $a$-core partition corresponding to $\mathbf{c} = (c_0,\ldots,c_{a-1})$ is $h_a(\mathbf{c}) = \frac{a}{2} \sum_{i=0}^{a-1} \left(c_i^2 + i c_i\right)$ \cite[Theorem 22]{Johnson}.  Therefore, we have the following interpretations of the theorems above.
\begin{enumerate}
\item Theorem \ref{AndersonThm} is equivalent to counting the number of integer points in $\mathrm{SC}_a(b)$.
\item Theorem \ref{JohnsonThm} is equivalent to computing $\sum_{\mathbf{c} \in \mathrm{SC}_a(b)} h_a(\mathbf{c})$.
\end{enumerate}

Johnson computes this weighted sum of lattice points by relating it
to a sum over the subset of integer points $(z_0,\ldots, z_{a-1})$ of the dilation of the standard simplex $b\Delta_{a-1}$ that satisfy $\sum i z_i \equiv 0 \pmod{a}$.  Johnson then shows that the sum he needs to compute is equal to $1/a$ times the sum of a quadratic function $w$ taken over all integer points of $b\Delta_{a-1}$. In order to conclude, he applies a result from Euler--Maclaurin theory, which is a version of the first part of Corollary \ref{integrate-maximize}, and also applies a version of weighted Ehrhart reciprocity that appears in \cite{FedericoErwan}. 

By Corollary \ref{polynomial}, there exists a family of weight lifting polytopes $P^{*}_{\alpha} \subset \mathbb{R}^{n^{*}}$ such that
\[
\sum_{x\in b\triangle_{a-1} \cap \mathbb{Z}^{a} }w(x) = \sum_{\alpha \in I}c_{\alpha}\left| P^{*}_{\alpha} \cap \Z^{n^{*}} \right|.
\]
It seems likely that further study of these kinds of weight lifting polytopes can lead to new techniques in the study of simultaneous core partitions.

\subsubsection{Numerical semigroups.}
A numerical semigroup $S$ is an additive submonoid of $\mathbb{N}_0 = \{0,1,2,\ldots\}$ with finite complement.  The elements of $\mathbb{N}_0 \setminus S$ are the \emph{gaps} of $S$, denoted $G(S) = \{h_{1},\ldots ,h_{g}\}$.  The number of gaps of $S$ is called its \emph{genus}, $g(S) = |G(S)| = g$. 
The \emph{weight} of $S$ is defined by $w(S) = (h_{1}+ \cdots+h_{g})-(1+2+\cdots+g)$.  The motivation for studying $w(S)$ comes from the theory of Weierstrass semigroups of algebraic curves \cite[Chapter 1, Appendix E]{ACGH}.

Numerical semigroups containing $m$ are in bijection with nonnegative integer points $(x_1,\ldots, x_{m-1}) \in \Z_{\ge 0}^{m-1}$ in the \emph{Kunz polyhedron} $P_{m}\subset \R^{m-1}$, which is defined via bounding inequalities
\[
x_{i} + x_{j} \geq x_{i+j} \text{ if } i+j < m,\quad
    x_{i} + x_{j} + 1 \geq x_{i+j - m} \text{ if }i+j > m.
\]
Let $NS(m,g)$ be the set of numerical semigroups containing  $m$ with genus $g$.  These semigroups are in bijection with the integer points of $P_{m,g}$, the polytope we get from 
$P_m$ by adding the additional constraint $\sum x_i = g$.  For a more extensive discussion of the connection between numerical semigroups containing $m$ and integer points in the Kunz polyhedron, see \cite[Section 4]{KaplanONeill}.
If $(k_1,\dots, k_{m-1})$ corresponds to a semigroup $S$, then 
\[
w(S) = \frac{m}{2}\sum_{i=1}^{m-1}k_i(k_i-1) + \sum_{i=1}^{m-1}i k_i - \frac{1}{2}\left(\sum_{i=1}^{m-1} k_i \right)\left(1+\sum_{i=1}^{m-1} k_i \right).
\]
There has been recent interest in statistical properties of weights of semigroups, see \cite[Section 5]{KaplanYe} and~\cite{KaplanSinghal}.

By Corollary \ref{polynomial}, there exists a family of weight lifting polytopes $P^{*}_{\alpha}\subset \mathbb{R}^{n^*}$ such that
\[
\sum_{S \in NS(g,m)}w(S) = \sum_{S \in P_{m,g}\cap \mathbb{Z}^{m-1}}w(S) = \sum_{\alpha \in I}c_{\alpha}\left| P^{*}_{\alpha} \cap \Z^{n^{*}} \right|.
\]

Studying this family of polytopes and applying a version of Corollary \ref{integrate-maximize} suggests an approach to the following question.
\begin{pbm}
For fixed $m$, what is the main term in the expression for $\sum_{S \in NS(g,m)}w(S)$ as $g\rightarrow \infty$?
\end{pbm}
\noindent In Section \ref{experiments}, we present some computational data that was generated using the ideas described in this section.

It is not so difficult to show that for a numerical semigroup $S$ with $g(S) = g$, $w(S) \le g(g-1)/2$, and that equality holds if and only if $2 \in S$.  The smallest nonzero element of a numerical semigroup $S$ is called its multiplicity and is denoted by $m(S)$.
\begin{pbm}
For fixed $m$, what is the maximum of $w(S)$ taken over the finite set of numerical semigroups with $m(S) = m$ and $g(S) = g$?
\end{pbm}
It is not difficult to work out the answer to this question for $m=3$, but in general not much is known. We suspect that for fixed $m$ and increasing $g$ the answer should grow like a constant depending on $m$ times $g^2$.  The second part of Corollary \ref{integrate-maximize} suggests a promising approach to this problem.

\subsection{Weighted Ehrhart in combinatorial representation theory}

There is a long tradition of using lattice points of polytopes in representation theory (see \cite{GTDeLoera+McAllister2004} and the references there). For example, it is well known that Gelfand-Tsetlin polytopes have important connections to the representation theory of $\mathfrak{gl}_n(\C)$.
Here, as an application of Theorem \ref{polytopelifting}, we provide new connections.

\subsubsection{Maximizing Kostka numbers.}
Fix a partition $\lambda \vdash n$ and let $SSYT(\lambda)$ denote the set of semi-standard Young tableaux of shape $\lambda$.
The Schur function $s_\lambda$ is 
$$s_{\lambda}(x) = \sum_{T \in SSYT(\lambda)} x^{T}= \sum_{\alpha \in \text{comp}(n)} K_{\lambda\alpha}x^{\alpha},$$
where $\text{comp}(n)$ is the set of weak compositions $n$ and $K_{\lambda\alpha}$ is the \emph{Kostka number} that counts the number of tableaux in $SSYT(\lambda)$ with content $\alpha$.
Evaluating $s_\lambda$ at $x_{1}=1,x_{2} = 1,\ldots,x_{N} = 1,x_{N+1} = 0,x_{N+2} = 0,\ldots$ yields 
$$ |SSYT(\lambda,N)| = \sum_{\alpha \in N\text{-comp}(n)} K_{\lambda\alpha},$$
where $SSYT(\lambda,N)$ is the set of semi-standard Young tableaux of shape $\lambda$ and entries bounded by $N$ and $N\text{-comp(n)}$ is the set of weak composition of $n$ with $N$ parts.

A weak composition of $n$ with $N$ parts is a lattice point in the scaled standard $(N-1)$-simplex $n\triangle_{N-1}$. The Kostka number $K_{\lambda\alpha}$ equals the number of lattice points in the Gelfand--Tsetlin polytope $GT(\lambda,\alpha)$ (see e.g.,
\cite{GTDeLoera+McAllister2004}), so $w(\alpha)=K_{\lambda\alpha}$ is a weight function. 
There have been contributions to understanding the behavior of $K_{\lambda\alpha}$ as $(\lambda,\alpha)$ vary and an example is \cite{HuhMatherneMeszarosStDizier} in which it is shown that they are log-concave.
Applying the method in Corollary \ref{integrate-maximize} one can use the weight lifting polytope given by Theorem \ref{polytopelifting} to compute $max_{\alpha \in N\text{-comp(n)}} K_{\lambda\alpha}$.

\subsubsection{Robinson-Schensted-Knuth (RSK) identity}
Fix partitions $\mu ,\nu \vdash n$ and recall the famous \emph{RSK identity} (for an introduction and 
details see e.g., \cite{combinatorialreptheory}):
$$ \sum_{\lambda \vdash n }K_{\lambda\mu}K_{\lambda\nu} =   N_{\mu,\nu}.$$
The left sum is over partitions of $n$ and the summands are products of  {Kostka numbers}. 
In fact, the left side of the identity is a weighted sum over the lattice points of 
\begin{equation}\label{eq_partition_simplex}
    P=\{\mathbf{x}\in\R^n \mid x_1+\cdots+x_n=n, x_1\ge x_2\ge \cdots \ge x_n\ge 0 \}.
\end{equation}
This is because the weight function  $w(\lambda)=K_{\lambda\mu}K_{\lambda\nu}$ is the number of lattice points in the Cartesian product $GT(\lambda,\mu)\times GT(\lambda,\nu)$ of two {Gelfand--Tsetlin polytopes}. 
The right-hand side of RSK, $N_{\mu\nu}$,
is the number of lattice points in the transportation polytope 
$$Mat_{n,n}(\mu,\nu) = \left\{(z_{ij})_{1 \leq i,j \leq n}\mid \sum_{j}z_{ij} = \mu_{i},\sum_{i}z_{ij} = \nu_{j}, z_{ij} \geq 0\right\}.$$

While RSK provides more information (e.g., a bijection), Theorem \ref{polytopelifting} gives a new polytope whose number of lattice points is the sum $\sum_{\lambda \vdash n }K_{\lambda\mu}K_{\lambda\nu}$.

\begin{cor}[A new RSK-like identity]
There exists a weight lifting 
polytope $P^{*}(\mu,\nu)\subseteq\mathbb{R}^{n^{2}+2n}$ which is combinatorially different from $Mat_{n,n}(\mu,\nu)$ such that
$$ \sum_{\lambda \vdash n }K_{\lambda\mu}K_{\lambda\nu} =   |P^{*}(\mu,\nu)\cap \mathbb{Z}^{n^{2}+2n}|.$$
\end{cor}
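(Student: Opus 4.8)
\textit{Proof plan.}
The plan is to realize the left-hand side $\sum_{\lambda\vdash n}K_{\lambda\mu}K_{\lambda\nu}$ as a weighted lattice-point sum over the partition polytope $P$ of \eqref{eq_partition_simplex} and then feed it directly into \Cref{polytopelifting}. The weight is $w(\lambda)=K_{\lambda\mu}K_{\lambda\nu}$, and since a product of lattice-point counts is the lattice-point count of the Cartesian product, $w(\lambda)=\bigl|\,GT(\lambda,\mu)\times GT(\lambda,\nu)\,\bigr|$. Thus $w$ is precisely the multivariate Ehrhart quasi-polynomial of the parametric family $Q(\lambda)=GT(\lambda,\mu)\times GT(\lambda,\nu)$, whose parameter $\lambda$ ranges over the lattice points of $P$. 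This is exactly the input required by \Cref{polytopelifting}, so the existence of a weight lifting polytope $P^{*}(\mu,\nu)$ with $\sum_{\lambda\vdash n}K_{\lambda\mu}K_{\lambda\nu}=|P^{*}(\mu,\nu)\cap\mathbb{Z}^{n^{*}}|$ will be immediate once $P$ and $Q$ are placed in the standard form of the theorem.

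First I would write $P$ in the form $\{\mathbf{x}\mid\mathbf{Ax}=\mathbf{b},\ \mathbf{x}\ge 0\}$: the single equation $x_1+\cdots+x_n=n$ together with the dominance inequalities $x_i\ge x_{i+1}$ is handled by the usual reparametrization (e.g.\ passing to the gap coordinates $g_i=x_i-x_{i+1}\ge 0$), keeping $P$ inside $\mathbb{R}^n$. Next I would exhibit $Q(\lambda)$ in the parametric form $\{\mathbf{y}\mid\mathbf{Cy}=\sum_i\lambda_i\mathbf{d_i}+\mathbf{e},\ \mathbf{y}\ge 0\}$: the coordinates $\mathbf{y}$ are the entries of the two triangular Gelfand--Tsetlin arrays, each of which contributes $\binom{n+1}{2}$ coordinates, and the crucial point is that $\lambda$ enters only through the top-row equalities of the two patterns, so the dependence on the parameters is linear exactly as the theorem demands; the interlacing and content relations are independent of $\lambda$ and are absorbed into $\mathbf{C}$ and $\mathbf{e}$. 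Tallying coordinates gives ambient dimension $n+2\binom{n+1}{2}=n^{2}+2n$, so $P^{*}(\mu,\nu)\subset\mathbb{R}^{n^{2}+2n}$, and applying \Cref{polytopelifting} yields the claimed identity.

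The last assertion, that $P^{*}(\mu,\nu)$ is combinatorially different from $Mat_{n,n}(\mu,\nu)$, is where I expect the real work. Both polytopes have dimension $(n-1)^{2}$ and the same number of lattice points (by RSK), so neither dimension nor an Ehrhart-type count can separate them, and one must compare a genuine combinatorial invariant such as the $f$-vector. The route I would take is to count facets: $Mat_{n,n}(\mu,\nu)$ has at most $n^{2}$ facets (one per inequality $z_{ij}\ge 0$, and generically exactly that many), whereas $P^{*}(\mu,\nu)$ draws its facets from the nonnegativity of its $n^{2}+2n$ coordinates, so showing that strictly more than $n^{2}$ of these are non-redundant already forces the two face lattices to differ. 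Concretely I would verify this by a direct $f$-vector computation in a small case such as $n=3$, guided by the structural heuristic that $P^{*}(\mu,\nu)$ fibers over $P$ with fibers $GT(\lambda,\mu)\times GT(\lambda,\nu)$, a feature absent from transportation polytopes (whose proper faces decompose as products of smaller transportation polytopes). Making this separation clean and uniform in $n$, and pinning down the precise standard-form encoding of the Gelfand--Tsetlin factors underlying the dimension count, is the main obstacle; the counting identity itself is a direct application of \Cref{polytopelifting}.
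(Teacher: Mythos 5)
Your derivation of the identity is exactly the paper's argument: the paper offers no separate proof of this corollary, since it follows immediately from the preceding discussion identifying $w(\lambda)=K_{\lambda\mu}K_{\lambda\nu}$ as the lattice-point count of the parametric family $GT(\lambda,\mu)\times GT(\lambda,\nu)$ over the partition simplex $P$ of \eqref{eq_partition_simplex}, followed by an application of Theorem \ref{polytopelifting}; your ambient-dimension tally $n+2\binom{n+1}{2}=n^{2}+2n$ and your handling of the standard form (gap coordinates for $P$, linearity of the top-row dependence on $\lambda$ for the two Gelfand--Tsetlin factors) are the right bookkeeping for this.

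On the assertion that $P^{*}(\mu,\nu)$ is combinatorially different from $Mat_{n,n}(\mu,\nu)$: you are right to flag this as the part needing real work, but your plan does not yet deliver it. A direct $f$-vector computation for $n=3$ establishes nothing for general $n$, and your facet-counting heuristic is only a heuristic --- the nonnegativity constraints of $P^{*}(\mu,\nu)$ could in principle be largely redundant, and in any case the number of facets depends on $\mu$ and $\nu$, so a uniform separation argument would have to be made carefully. Note, however, that the paper itself states this combinatorial-difference claim without proof, so your proposal is not weaker than the paper on this point; it is simply that neither your sketch nor the paper's text constitutes a proof of that clause.
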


\subsubsection{Littlewood--Richardson Coefficients.}
Schur functions are central objects in representation theory and combinatorics.
The skew Schur function for partitions $\lambda,\mu \vdash n$ is
$$s_{\lambda / \mu}(x) = \sum_{\alpha \in \text{comp}(n)} K_{\lambda/\nu,\alpha}x^{\alpha},$$
where the sum is over all compositions of $n$ and $K_{\lambda/\nu,\alpha}$ counts the number of skew semi-standard Young tableaux of shape $\lambda/\nu$ and weight $\alpha$.
The Littlewood--Richardson rule (see e.g., \cite{Sagan}) expresses the skew Schur functions in terms of Schur functions,
$$ s_{\lambda / \mu}(x) =  \sum_{\nu \vdash n}c_{\mu\nu}^{\lambda}s_{\nu}(x).$$
Comparing the expression of the coefficient of the monomial $x^{\alpha}$ yields
$$K_{\lambda/\nu,\alpha} = \sum_{\nu \vdash n}c_{\mu\nu}^{\lambda}K_{\nu\alpha}.$$
The Littlewood--Richardson coefficient $c_{\mu\nu}^{\lambda}$ counts the number of lattice points in the \emph{hive polytope} $H_{\mu\nu}^{\lambda}$ (see e.g., \cite{hive_polytopes}).
Applying Theorem \ref{polytopelifting} to the simplex in \eqref{eq_partition_simplex} and the weight function
$w(\nu)=c_{\mu\nu}^{\lambda}K_{\nu\alpha}$, which counts the number of lattice points in $H_{\mu\nu}^{\lambda}\times GT(\nu,\alpha)$, we obtain the following corollary.

\begin{cor}
There exists a weight lifting polytope $P^{*}(\lambda/\mu,\alpha)\subseteq\mathbb{R}^{n^{2}+2n}$ such that
$$ \sum_{\lambda \vdash n }c_{\mu\nu}^{\lambda}K_{\nu\alpha} =   |P^{*}(\lambda/\mu,\alpha)\cap \mathbb{Z}^{n^{2}+2n}|.$$
\end{cor}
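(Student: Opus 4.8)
The plan is to apply Theorem \ref{polytopelifting} verbatim, taking the base polytope $P$ to be the partition simplex of \eqref{eq_partition_simplex} and the parametric family to be $Q(\nu) := H^{\lambda}_{\mu\nu} \times GT(\nu,\alpha)$. First I would record that the lattice points of $P$ are precisely the partitions $\nu \vdash n$ with at most $n$ parts, so the summation index in the statement runs exactly over $P \cap \Z^n$ (here and below I read the index as $\nu$, the variable on which the weight actually depends). Using the substitution $\nu_i = m_i + m_{i+1} + \cdots + m_n$ with $m_i \ge 0$, the simplex is presented in the standard form $\{\mathbf{m} : \sum_j j\, m_j = n,\ \mathbf{m} \ge \mathbf{0}\}$ required by the theorem, and each $\nu_i$ remains a linear function of the new coordinates.

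Second, I would observe that the weight factorizes as a product of lattice-point counts: since lattice points of a Cartesian product are counted multiplicatively, $w(\nu) = c^{\lambda}_{\mu\nu} K_{\nu\alpha} = |Q(\nu) \cap \Z^{m}|$, using $c^{\lambda}_{\mu\nu} = |H^{\lambda}_{\mu\nu} \cap \Z^{p}|$ and $K_{\nu\alpha} = |GT(\nu,\alpha) \cap \Z^{q}|$ for the appropriate $p,q$. Thus $w$ is exactly the multivariate Ehrhart quasi-polynomial attached to $Q(\nu)$ in the hypotheses of Theorem \ref{polytopelifting}.

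The heart of the argument is to exhibit $Q(\nu)$ in the parametric shape $\{\mathbf{y} : \mathbf{C}\mathbf{y} = \sum_{i=1}^n \nu_i \mathbf{d_i} + \mathbf{e},\ \mathbf{y} \ge \mathbf{0}\}$. Both factors are cut out by linear inequalities together with boundary equalities: interlacing inequalities for the Gelfand--Tsetlin pattern, and rhombus inequalities for the hive. I would introduce one slack variable per inequality to reach the equality-and-nonnegativity form $\{\mathbf{y} : \mathbf{C}\mathbf{y} = \mathbf{d},\ \mathbf{y} \ge \mathbf{0}\}$ of Definition \ref{def:Q} for each factor. The decisive point is that in both factors the partition $\nu$ enters only through boundary data that is \emph{linear} in $\nu_1,\ldots,\nu_n$ --- the top row of the Gelfand--Tsetlin pattern equals $\nu$, and one side of the hive triangle is the partial-sum sequence of $\nu$ --- while the fixed partitions $\mu,\lambda$ and the content $\alpha$ contribute only to $\mathbf{e}$. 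Moving this boundary data to the right-hand side therefore yields exactly $\sum_{i=1}^n \nu_i \mathbf{d_i} + \mathbf{e}$. Placing the two constraint matrices in block-diagonal position, $\mathbf{C} = \mathrm{diag}(\mathbf{C}_{\mathrm{hive}}, \mathbf{C}_{\mathrm{GT}})$, exactly as in the proof of Corollary \ref{lifting}, realizes $Q(\nu)$ in the required form. Theorem \ref{polytopelifting} then produces the weight lifting polytope $P^*(\lambda/\mu,\alpha)$ with $\sum_{\nu \vdash n} c^{\lambda}_{\mu\nu} K_{\nu\alpha} = |P^*(\lambda/\mu,\alpha)\cap \Z^{n^*}|$.

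The step I expect to demand the most care is the dimension count $n^* = n^2 + 2n$: one must fix explicit models for $H^{\lambda}_{\mu\nu}$ and $GT(\nu,\alpha)$ --- how many slack variables are introduced and which boundary equalities are retained --- so that the two ambient dimensions together contribute $n^2 + n$ coordinates, which added to the $n$ coordinates of the base simplex give $n^2 + 2n$. This is bookkeeping rather than a genuine obstacle, but it is the one place where a consistent choice of coordinatizations must be pinned down, and the same choices govern the matching count in the preceding RSK-type corollary.
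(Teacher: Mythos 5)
Your proposal follows exactly the paper's own route: the paper derives this corollary in a single sentence by applying Theorem \ref{polytopelifting} to the partition simplex \eqref{eq_partition_simplex} with the weight $w(\nu)=c_{\mu\nu}^{\lambda}K_{\nu\alpha}$ realized as the lattice-point count of $H_{\mu\nu}^{\lambda}\times GT(\nu,\alpha)$, which is precisely your plan. You correctly flag and repair the details the paper glosses over (the summation index should be $\nu$, the simplex must be put in the form $\{\mathbf{Ax}=\mathbf{b},\ \mathbf{x}\ge\mathbf{0}\}$, and the parametric presentation of the product polytope via slack variables), so your argument is a more careful version of the same proof.
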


\subsubsection{Newell-Littlewood Polytopes.}
The Newell-Littlewood coefficients are defined from Littlewood--Richardson coefficients by the cubic expression,
$$ N_{\mu,\nu,\lambda,} = \sum_{\alpha,\beta,\gamma}c_{\alpha,\beta}^{\mu}c_{\alpha,\gamma}^{\nu}c_{\beta,\gamma}^{\lambda},$$
where the summation is taken over all possible integer  partitions with $|\alpha|+|\beta| = |\mu|$, $|\alpha|+|\gamma| = |\nu|$ and $|\beta|+|\gamma| = |\lambda|$. By Theorem~\ref{polytopelifting}, there exists a weight lifting polytope $P^{*}(\mu,\nu,\lambda)\subseteq  \mathbb{Z}^{n^{*}}$ such that
$$ \sum_{\alpha,\beta,\gamma}c_{\alpha,\beta}^{\mu}c_{\alpha,\gamma}^{\nu}c_{\beta,\gamma}^{\lambda} = |P^{*}(\mu,\nu,\lambda) \cap \mathbb{Z}^{n^{*}}|.$$

Note that for partitions $\mu,\nu,\lambda$ with at most $k$ parts, Gao, Orelowitz, and Yong \cite[Section 5]{Gaoetal2021} define \emph{Newell-Littlewood polytopes} $\mathcal{P}_{\mu,\nu,\lambda}\subseteq \R^{3n^2}
$ with $N_{\mu,\nu,\lambda,} = |\mathcal{P}_{\mu,\nu,\lambda}\cap  \mathbb{Z}^{3k^{2}}|$. It would be interesting to compare these polytopes 
with our weight lifting polytopes.

\section{Experiments} \label{experiments}

    \subsection{Integration over polytopes}
        We present an experiment related to symbolic computing. Let $P$ be a $d$-dimensional rational convex polyhedron inside $\R^n$ and let $w \in \Q[x_1,\dots,x_n]$ be a (homogeneous) polynomial with rational coefficients. We consider the problem of efficiently computing the \emph{exact} value of the integral of the polynomial $w$ over $P$, denoted $\int_{P} w \dif m$, where $\dif m$ is the \emph{integral Lebesgue measure} on the affine hull of the polytope $P$. For rational inputs, the output will always be a rational number $\int_P f \dif m$. Integration over polytopes was studied extensively in \cite{Barvinok-1991}, \cite{Barvinok-1992} and more recently in \cite{BBDKV08,DDM2013}.

        Integration over polytopes is in general an important but difficult problem. (See \cite{BBDKV08,Gritzmann+Klee} and the references therein). Our contribution starts from an old observation: It is known that the computation of the leading coefficient of $\weightsum{P}{w}{t}$ is the same as computing the integral of $w$ over the polytope $P$ (see \cite{BBDKV08}). Theorem \ref{polytopelifting} provides a new avenue to compute that leading coefficient because $\weightsum{P}{w}{t}=\weightsum{P^*}{1}{t}$.  We can replace integration over $P$ with computation of the leading coefficient of a usual Ehrhart leading coefficient, which is a direct volume computation. 

        In the paper \cite{DDM2013} the authors released integration software implemented in LattE\footnote{Code is available from \url{https://www.math.ucdavis.edu/~latte/}.} that includes several fast algorithms. They depend on two main facts. The first is that integrals of arbitrary \emph{powers of  linear forms} can be computed in polynomial time. Waring's theorem for polynomials says that any polynomial can be written as a finite sum of \emph{powers of linear forms}. 
        Therefore, to integrate an input polynomial LattE uses such a  decomposition, $\sum_{\ell}c_{\ell} \langle \ell, x \rangle^M$. The second fact, is that we can triangulate any polytope and just do the integral over simplices, then add the pieces. For all details of the LattE Integration algorithms and implementation see \cite{alexanderhirschowitz,Barvinok-1991,Barvinok-1992,BBDKV08,DDM2013}.

         For this paper we implemented our new algorithm, which we the \emph{WLPvolume} algorithm, in SAGE. We apply Theorem \ref{polytopelifting} to find the weight lifting polytope and then we compute its volume using existing LattE code. We compared this with LattE \emph{Integration} as implemented in \cite{DDM2013}, which is available in the latest LattE release. Table \ref{tab:monomialsimplex} and Table \ref{tab:linearformsimplex}  present a comparison of the LattE Integration method and the WLPvolume method. The columns give the dimension of the polytope and the rows give the degree of the integrand. Each cell has two running times, the LattE Integration method is the top one and the WLPvolume method is the bottom one. 

        Table \ref{tab:monomialsimplex} provides data where we integrate a monomial over the standard simplex. The LattE Integration method is extremely slow when both dimension and degree are high. LattE's algorithm needs to turn one monomial into a sum of powers of linear forms.  This decomposition often involves thousands of linear forms. In the final case included in the table, the algorithm did not even finish. In contrast WLPvolume completed this computation in a little over 3 seconds. On the other hand, Table \ref{tab:linearformsimplex} provides data when we integrate a power of a linear form over the standard simplex. The performance reverses and the WLPvolume method is extremely inefficient when both dimension and degree are high. The WLPvolume algorithm often needs to decompose the constructed weight lifting polytope into thousands of simplicial cones to compute the volume.

        \begin{table}[htbp]
            \centering
            \begin{tabular}{ m{2em} || c c c c c c c c c c }
                 & \multicolumn{10}{c}{Dimension of the simplex} \\
                Deg & 1 & 2 & 3 & 4 & 5 & 6 & 7 & 8 & 9 & 10 \\
                \hline\hline
                \multirow{2}{0.8em}{1}
                 & 0.01 & 0.00 & 0.00 & 0.01 & 0.01 & 0.02 & 0.01 & 0.02 & 0.02 & 0.06 \\
                 & 0.00 & 0.01 & 0.01 & 0.02 & 0.02 & 0.02 & 0.03 & 0.05 & 0.06 & 0.08 \\
                \hline
                \multirow{2}{0.8em}{2}
                & 0.00 & 0.01 & 0.01 & 0.01 & 0.01 & 0.04 & 0.12 & 0.41 & 1.43 & 5.13 \\
                & 0.01 & 0.01 & 0.01 & 0.03 & 0.04 & 0.06 & 0.10 & 0.13 & 0.21 & 0.31 \\
                \hline
                \multirow{2}{0.8em}{3}
                & 0.01 & 0.00 & 0.00 & 0.02 & 0.04 & 0.21 & 1.01 & 5.12 & 25.87 & 123.82 \\
                & 0.00 & 0.01 & 0.03 & 0.03 & 0.09 & 0.13 & 0.21 & 0.35 & 0.55 & 0.80 \\
                \hline
                \multirow{2}{0.8em}{4}
                & 0.00 & 0.00 & 0.01 & 0.02 & 0.11 & 0.80 & 5.38 & 35.49 & 222.07 & 1345.95 \\
                & 0.01 & 0.01 & 0.03 & 0.07 & 0.13 & 0.27 & 0.45 & 0.74 & 1.18 & 1.73 \\
                \hline
                \multirow{2}{0.8em}{5}
                & 0.00 & 0.01 & 0.01 & 0.04 & 0.29 & 2.55 & 21.53 & 166.20 & 1282.92 & - \\
                & 0.01 & 0.02 & 0.06 & 0.12 & 0.25 & 0.48 & 0.87 & 1.39 & 2.21 & 3.32 \\
                \hline
            \end{tabular}
            \caption{Integration(upper) v.s. WLPvolume(lower): one monomial $\prod x_i^{row}$  over the standard $col$-simplex.}
            \label{tab:monomialsimplex}
        \end{table}

        \begin{table}[htbp]
            \centering
            \begin{tabular}{ m{2em} || c c c c c c c c c c }
                 & \multicolumn{10}{c}{Dimension of the simplex} \\
                Deg & 1 & 2 & 3 & 4 & 5 & 6 & 7 & 8 & 9 & 10 \\
                \hline\hline
                \multirow{2}{0.8em}{2}
                & 0.01 & 0.01 & 0.01 & 0.01 & 0.01 & 0.01 & 0.01 & 0.01 & 0.01 & 0.02 \\ 
                & 0.01 & 0.01 & 0.01 & 0.02 & 0.01 & 0.01 & 0.02 & 0.01 & 0.03 & 0.03 \\
                \hline
                \multirow{2}{0.8em}{3}
                & 0.01 & 0.00 & 0.01 & 0.00 & 0.01 & 0.02 & 0.01 & 0.01 & 0.01 & 0.01 \\ 
                & 0.01 & 0.01 & 0.02 & 0.02 & 0.01 & 0.02 & 0.03 & 0.03 & 0.04 & 0.05 \\
                \hline
                \multirow{2}{0.8em}{4}
                & 0.00 & 0.01 & 0.01 & 0.01 & 0.01 & 0.00 & 0.00 & 0.01 & 0.02 & 0.02 \\
                & 0.01 & 0.01 & 0.01 & 0.02 & 0.02 & 0.03 & 0.03 & 0.07 & 0.11 & 0.15 \\
                \hline
                \multirow{2}{0.8em}{5}
                & 0.00 & 0.01 & 0.01 & 0.00 & 0.02 & 0.01 & 0.01 & 0.01 & 0.01 & 0.01 \\
                & 0.01 & 0.01 & 0.02 & 0.02 & 0.03 & 0.05 & 0.10 & 0.17 & 0.32 & 0.54 \\
                \hline
                \multirow{2}{0.8em}{6}
                & 0.00 & 0.01 & 0.00 & 0.01 & 0.00 & 0.01 & 0.01 & 0.01 & 0.01 & 0.01 \\ 
                & 0.00 & 0.01 & 0.02 & 0.03 & 0.04 & 0.08 & 0.21 & 0.41 & 1.48 & 3.43 \\
                \hline
                \multirow{2}{0.8em}{7}
                & 0.01 & 0.00 & 0.00 & 0.00 & 0.02 & 0.02 & 0.01 & 0.01 & 0.01 & 0.02 \\ 
                & 0.01 & 0.02 & 0.02 & 0.03 & 0.06 & 0.19 & 0.53 & 1.61 & 6.42 & 14.95 \\
                \hline
                \multirow{2}{0.8em}{8}
                & 0.00 & 0.01 & 0.01 & 0.01 & 0.00 & 0.01 & 0.01 & 0.01 & 0.01 & 0.01 \\
                & 0.01 & 0.02 & 0.03 & 0.04 & 0.10 & 0.33 & 1.13 & 5.49 & 20.54 & 72.00 \\
                \hline
                \multirow{2}{0.8em}{9}
                & 0.00 & 0.00 & 0.01 & 0.01 & 0.01 & 0.01 & 0.01 & 0.01 & 0.02 & 0.01 \\
                & 0.02 & 0.03 & 0.03 & 0.04 & 0.15 & 0.76 & 2.85 & 16.05 & 76.25 & 236.78 \\
                \hline
                \multirow{2}{0.8em}{10}
                & 0.00 & 0.01 & 0.01 & 0.01 & 0.01 & 0.00 & 0.01 & 0.01 & 0.02 & 0.01 \\
                & 0.02 & 0.02 & 0.02 & 0.06 & 0.30 & 1.43 & 6.68 & 38.55 & 231.76 & 1694.71 \\
                \hline
            \end{tabular}
            \caption{Integration(upper) v.s. WLPvolume(lower): a power of a linear form $(\sum c_ix_i)^{row}$ over the standard $col$-simplex.}
            \label{tab:linearformsimplex}
        \end{table}

    \subsection{Weights of numerical semigroups}
        As we described in Section \ref{WENT}, numerical semigroups containing an integer $m$ with genus $g$ are in bijection with the lattice points inside the Kunz polytope $P_{m,g}$ and the weight of a numerical semigroup $S$ is a quadratic polynomial in the coordinates of the corresponding point. Therefore, we can study the average weight of these finitely many numerical semigroups for each $m$ and $g$.

        Note that the weight of a numerical semigroup with genus $g$ is the sum of elements in the gaps minus the sum from $1$ to $g$ and the element in the gaps are distinct $g$ integers ranging from $1$ to $2g-1$. So roughly speaking, the weight of a numerical semigroup has a quadratic growth with respect to the genus $g$. We expect that for fixed $m \ge 2$, as $g \rightarrow{\infty}$, the average weight of a semigroup containing $m$ with genus $g$ should grow like a constant depending on $m$ times $g^2$.  We implemented the weight lifting method using algorithms in LattE to collect data for $3 \leq m \leq 8$ and $g \leq 200$.

        The results are presented in Figure \ref{fig:averageplot}. For each $m$ and $g$, we first calculate the sum of $w(S)$ taken over the numerical semigroups $S \in NS(m,g)$, which corresponds to taking a weighted sum over integer points in $P_{m,g}$. Dividing by the number of integer points in $P_{m,g}$ gives the average weight of numerical semigroups containing $m$ with genus $g$. Lastly, we divide the average weight by $g^2$. The data suggests that these values seem to converge as $g$ increases.  We note that it is not difficult to compute the average weight in the case $m=3$ and show that the corresponding value in the figure should converge to $\frac{5}{18}$, which matches the data very well.

        \begin{figure}[htbp]
        \centering
            \includegraphics[width=0.618\textwidth]{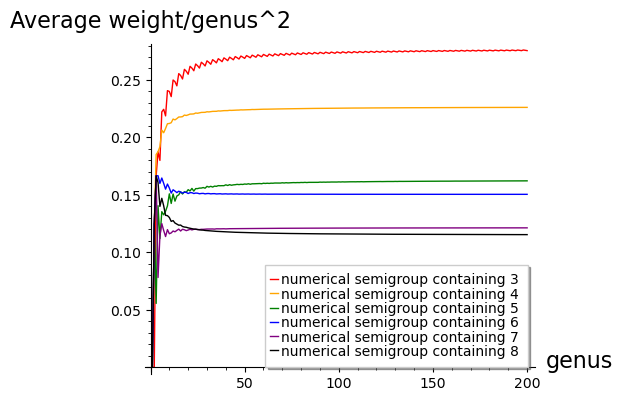}
        \caption{Curve plot of the quotient of average weight and genus square.}
        \label{fig:averageplot}
        \end{figure}

\section*{Acknowledgements}
We are truly grateful to Ardila who introduced us to his results with Brugall\'e that we have here extended to quasi-polynomial weights. Part of this work was done while most of the authors visited the American Institute of Mathematics in Palo Alto. The second author was supported by NSF Grant DMS 2142656.  The third author was supported by NSF Grant DMS 2154223.

\bibliographystyle{siam}
\bibliography{sample}

\end{document}